\newtheorem{prop}{Proposition}[section]
\newtheorem{rem}{Remark}[section]
\newtheorem{lema}{Lemma}[section]
\newtheorem{defi}{Definition}[section]
\newtheorem{teo}{Theorem}[section]
\newtheorem{eje}{Example}[section]
\newtheorem*{claim*}{Claim}
\def\c{{\bf C}}
\def\eps{\varepsilon}
\def\phi{\varphi}
\def\R{{\mathbb R}}
\def\N{{\mathbb N}}
\def\P{{\mathcal P}}
\def\F{{\mathcal F}}
\def\M{{\mathcal M}}
\def\es{{\emptyset}}
\def\sm{\setminus}
\def\crit{{\mathcal Cr}}
\def\bd{\partial }
\def\le{\leqslant}
\def\ge{\geqslant}
\def\st{such that }
\def\F{\mathcal{F}}
\def\M{\mathcal{M}}
\def\trans{\mathcal{T}}
\title[Transience in Dynamical Systems]{Transience in Dynamical Systems}
\date{\today}
\subjclass[2000]{37D35, 37D25, 37E05}
\keywords{Equilibrium states, thermodynamic formalism, transience, interval maps, non-uniform hyperbolicity}
\author{Godofredo Iommi} \address{Facultad de Matem\'aticas,
Pontificia Universidad Cat\'olica de Chile (PUC), Avenida Vicu\~na Mackenna 4860, Santiago, Chile}
\email{giommi@mat.puc.cl}
\urladdr{http://www.mat.puc.cl/\textasciitilde giommi/}
\author{Mike Todd} \address{
Mathematical Institute,
University of St Andrews,
North Haugh,
St Andrews,
KY16 9SS,
Scotland }
\email{mjt20@st-andrews.ac.uk }
\urladdr{http://www.mcs.st-and.ac.uk/~miket/}
\begin{document}

\begin{abstract}
We extend the theory of transience to general dynamical systems with no Markov structure assumed.   This is linked to the theory of phase transitions.  We also provide new examples to illustrate different kinds of transient behaviour.
\end{abstract}

\maketitle

\section{Introduction}
\label{sec:intro}

A classical question in the theory of random walks, Markov chains and ergodic theory is whether a system is recurrent or transient.  The notion of recurrence and its consequences  are well understood in the former two cases \cite{fe,v1,v2}.  Whereas  in the realm of ergodic theory it is not even clear what good definitions of recurrence or transience are.  
The definition involves a triple $(X,f, \phi)$, where $f:X \to X$ is a dynamical system and $\phi:X \to \R$ is a function (or potential). 
In the context of countable Markov shifts,  with a fairly weak assumption on the smoothness of the potential,   Sarig \cite{Sarnull} gave a definition of recurrence which comes naturally from the theory of Markov chains.  This definition even applies in situations such as those studied in \cite{Hnonuni, HofKel_pw,ManPom}, which can be thought of as non-uniformly hyperbolic, although the dynamical system still has a well-defined Markov structure.
The aim of this paper is to provide a definition of transience which applies in a much more general context. In particular, the definition we propose (see Definition \ref{def:trans_new}) requires no Markov structure for $(X,f)$, therefore we can not make use of the corresponding notions for random walks or Markov chains.  We also only need very weak assumptions on the smoothness of potentials.

In Section~\ref{sec:no conf}  we show that our definition can be checked even for certain non-uniformly hyperbolic dynamical systems with no canonical Markov structure.  We particularly focus on multimodal interval maps with the so-called `geometric potentials',  since this class of systems combines both a highly non-Markov structure and quite singular potentials.  This analysis passes to many other classes of interval maps.  



Our definition of recurrence/transience makes use of thermodynamic formalism.  Given a system $(X,f,\phi)$, one can study the recurrence/transience within the family of potentials $\{t\phi:t\in \R\}$.  In the cases we know, the transition at a point $t=t_0$ of $t\phi$ from recurrence to transience coincides with the pressure function $t\mapsto P(t \phi)$ not being real analytic at $t_0$.  This lack of real analyticity is referred to as a \emph{phase transition} at $t_0$. In Section \ref{sec:ex} we examine the transition from recurrence to transience in certain systems, giving some explicit and fairly elementary examples to illustrate the range of possible forms these transitions may take. 

\section{Definition of transience}
\label{sec:trans}
Our definition of recurrence/transience uses the ideas of pressure as well as conformality and conservativity of measures for a given dynamical system.  We first introduce pressure. 
   Throughout we will be dealing with metric spaces $X$, and Borel functions $f:X\to X$, our \emph{dynamical systems}.  We define $\M_f$ to be the set of $f$-invariant Borel probability measures.   Given a Borel function $\phi:X\to \R$ (the \emph{potential}), we consider the \emph{pressure} to be
$$P(\phi):=\sup \left\{ h(\mu) + \int \phi \ d \mu : \mu \in \M_f \textrm{ and } -  \int \phi \ d \mu< \infty \right\},$$
where $h(\mu)$ denotes the entropy of the measure $\mu$. A measure $\nu \in \M_f$ attaining the above supremum is called an \emph{equilibrium measure/state} for $(X,f,\phi)$.  For many systems, the existence of an equilibrium state is a sufficient condition for recurrence.  However, as explained below, there are many examples of systems $(X,f, \phi)$ with an equilibrium state, but which should not be considered recurrent.

The main tool used to study pressure, equilibrium states, and, as we will see, recurrence, is the \emph{Transfer (or Ruelle) operator}, which is defined by \begin{equation}
(L_{\phi}g)(x)=\sum_{Ty=x} g(y) \exp(\phi(y))
\end{equation}
for $x\in X$, $\phi$ and $g$ in some suitable Banach spaces (constructing suitable Banach spaces where this operator acts and behaves well is an important line of research in the field).  As we will see in  the classical result Theorem~\ref{thm:Ru}, in good cases, there exists $\lambda>0$, a Borel probability measure $m$ on $X$ and a continuous function $h:X\to [0,\infty)$ such that
\begin{equation}L_{\phi}  h = \lambda h \textrm{ and }  L_{\phi}^* m = \lambda m,\label{eq:h and m}
\end{equation}
where $\lambda=e^{P(\phi)}$ and $L_{\phi}^*$ is the dual operator of $L_{\phi}$ (i.e, for a continuous function  $\psi:X\to \R$, then  $\int\psi~dm=\int L_\phi \psi~dm$).  If, moreover, $ \int h \ dm< \infty$ then
the measure $\mu:=\frac{hm}{\int h~dm}$ is in $\M_f$.  Such measures $m$ will occupy an important place in this work and are the subject of the following definition.

%

\begin{defi}
Given $f:X \to X$, a dynamical system and $\phi: X \to \R$ a potential,  we say that a Borel measure $m$ on $X$ is \emph{$\phi$-conformal} if
$$L_{\phi}^*m=m$$
and there exists an at most countable collection of open sets $\{U_i\}_i\subset X$ such that $m\left(X\sm \cup_i U_i\right)=0$ and $m(U_i)<\infty$ for all $i$.
\end{defi}


\begin{rem}
Notice that by this definition, $m$ in \eqref{eq:h and m} is $(\phi-\log\lambda)$-conformal.  Also we point out that some authors would consider such $m$ to be $\phi$-conformal; unless $\lambda=1$, we do not.

Also note that if $f^n:U\to f^n(U)$ is 1-to-1 then $m(f^n(U))=\int_Ue^{-S_n\phi}~dm$ where
 $$S_n\phi(x):=\phi(x)+\cdots +\phi\circ f^{n-1}(x).$$

A dynamical system is \emph{topologically exact}, if for any open set $U\subset X$ there exists $n\in\N$ such that $f^n(U)=X$.  If our system is topologically exact and $\phi$ is a bounded potential, then by the above $m(X)<\infty$.  Whenever the measure of $X$ is finite then we normalise.
\end{rem}

We will define $(X,f, \phi)$ to be recurrent if there is a $(\phi-P(\phi))$-conformal measure which has the properties outlined below.

A measure $\mu$ on $X$ is called  $f$-\emph{non-singular} if $\mu(A)=0$ if and only if $\mu(f^{-1}(A))=0.$   A set $ W \subset X$ is called $\emph{wandering}$ if  the sets $\{f^{-n}(W)\}_{n=0}^{\infty}$ are disjoint.

\begin{defi} \label{def:con} Let $f: X \to X$ be a dynamical system. An $f$-non-singular measure $\mu$ is called \emph{conservative} if every wandering set $W$  is such that
$\mu(W)=0$.
\end{defi}
A conservative measure satisfies the Poincar\'e Recurrence Theorem (see \cite[p.17]{Aaro_book}, or \cite[p.31]{sanotes}).

A further property we would like any set of recurrent points to satisfy is given in the following definition. 
\begin{defi}
Let $B_{\eps}(x_0)$ denote the open ball of radius $\eps$ centred at the point $x_0$.
We say that $x\in X$ \emph{goes to $\eps$-large scale} at time $n$ if there exists an open set $U\ni x$ such that $f^n: U \to B_\eps(f^n(x))$ is a bijection.  We say that $x$ \emph{goes to $\eps$-large scale infinitely often} if there exists $\eps>0$ such that $x$ goes to $\eps$-large scale for infinitely many times $n\in \N$.  Let $LS_\eps\subset X$ denote the set of points which go to $\eps$-large scale infinitely often.
\end{defi}

\begin{defi}
Given a Borel measure $\mu$ on $X$ we say that $\mu$ is \emph{weakly expanding} if there exists $\eps>0$ such that $\mu(LS_\eps)>0$.
\end{defi}

We use the term `weakly expanding' for our measures to distinguish from the expanding measures in \cite{Pinheiro}  (note that those measures go to large scale with \emph{positive frequency}).  As we note in Remark~\ref{rmk:shift WE}, for a dynamical system with a Markov structure, any conservative measure is automatically weakly expanding.
We are now ready to define recurrence.

\begin{defi} \label{def:trans_new}
 Let $f: X \to  X$ be a dynamical system and $\phi: X \to [-\infty, \infty]$ a Borel potential.  Then $(X,f, \phi)$ is called \emph{recurrent} if there is a finite weakly expanding conservative $(\phi-P(\phi))$-conformal measure $m$. Moreover, if  there exists a finite $f$-invariant measure $\mu\ll m$, then we say that $\phi$ is \emph{positive recurrent}; otherwise we say that $\phi$ is \emph{null recurrent}.

If the system $(X,f, \phi)$ is not recurrent it is called  \emph{transient}.
\end{defi}

Sarig defined recurrence in the setting of countable Markov shifts with fairly well-behaved potentials, as in \eqref{eq:Sar rec} below.  We will compare these two definitions of recurrence: in particular we give examples, outside the context used by Sarig, which the definition in \eqref{eq:Sar rec} couldn't handle and show that ours can handle them. An easily stated such example is the following.  

\begin{eje} \label{beta}
Let $\beta >1$ be a real number. The $\beta$-transformation is the interval map  $T_{\beta}:[0,1) \to [0,1)$ defined by $T_{\beta}(x) = \beta x \mod 1.$ This map is Markov for only countably many values of $\beta$. It was shown by Walters \cite{wbeta} that  for any $\beta>1$, if $\phi:[0,1) \to \R$ is a Lipschitz potential then there exists a finite weakly expanding conservative $(\phi-P(\phi))$-conformal measure $m$. Moreover, there exists a finite $T_{\beta}$-invariant measure $\mu\ll m$. That is, if $\phi$ is a Lipschitz potential then the triple $([0,1), T_{\beta}, \phi)$ is positive recurrent. 
\end{eje}

We will be particularly interested in systems $(X, f, \phi)$ where for some values of $t$, $(X, f, t\phi)$ is recurrent, and for others it is transient.  This phenomenon is associated to the smoothness of the pressure function $p_\phi(t):=P(t\phi)$.  If the function is not real analytic at some $t_0\in \R$ then we say there is a \emph{phase transition} at $t_0$.  If this function is not even $C^1$ at $t_0$ then we say that there is a \emph{first order phase transition} at $t_0$.  Phase transitions can also be associated with the non-uniqueness of equilibrium states, but this is not always the case, as shown in Section~\ref{ssec:ho-ke} (see the penultimate row of Figure~\ref{table:hof}). For our examples, we could alternatively characterise the point $t_0$ as: for any open interval $U\ni t_0$ there exist $t_1, t_2\in U$ such that system $(X, f, t_1\phi)$ is recurrent, and $(X, f, t_1\phi)$ is transient.  We will give further examples to show what can happen at phase transitions. 

We finish this section by asking some questions our definition of transience raises:

\begin{list}{$\bullet$}
{ \itemsep 1.0mm \topsep 0.0mm \leftmargin=7mm} 
\item Are there examples for which our definition of recurrence/transience conflicts with that in  \eqref{eq:Sar rec}?  In all of the examples we have here, if the definition given by  \eqref{eq:Sar rec} is well-defined, then so is ours and they coincide. 
\item Is our definition of transience really more widely applicable than that given by  \eqref{eq:Sar rec}?  Example~\ref{beta} already gives evidence that this is so.   We give further evidence in Section~\ref{sec:intervals}.  
\item We tend to see transience kick in after/before some phase transition, so if this were always the case then we could define transience in this way.  Is it true that given a system $(X,f,\phi)$ where $(X,f,t\phi)$ is recurrent for   \emph{all} $t<t_0$, and there is a phase transition at $t_0$ then the system is transient for all $t>t_0$?  We construct examples where this is not the case in Section~\ref{sec:ex}, see also \cite[Example 3]{Sarphase}. 
\item What does the existence of a dissipative $(t_0\phi-p_\phi(t_0))$-conformal measure tell us about a phase transition at $t_0$?
\end{list}

The structure of the paper is as follows

\begin{list}{$\bullet$}
{ \itemsep 1.0mm \topsep 0.0mm \leftmargin=7mm}
\item In Section~\ref{sec:symb}, the theory of recurrence and transience is presented in the best understood setting (primarily through the work of Cyr and Sarig), the Markov shift case, firstly in the finite alphabet case, and then in the countable.  We show how our definition of recurrence fits in with this case, give a very brief sketch of some examples, as well as discussing an alternative type of definition of transience due to Cyr and Sarig.  

\item In Section~\ref{sec:intervals}, examples of interval maps which exhibit both transient and recurrent behaviour are given.  The transient behaviour is due to either the lack of smoothness of the potentials or the lack of hyperbolicity of the underlying dynamical system.  Our definition of transience is shown to be well-suited to this setting.  Sarig's definition of recurrence/transience is not tractable in many of these cases.  We also discuss the transition from recurrence to transience here.  Our primary applications are to multimodal interval maps $f:I\to I$ with the geometric potential $-\log|f'|$, which are discussed in detail in Section~\ref{sec:no conf}. 

\item In Section~\ref{sec:ex}, we give a class of simple interval maps and fairly elementary potentials which exhibit a range of different behaviours at the transition from recurrence to transience.

\end{list}

\section{Symbolic spaces} \label{sec:symb}

In this section we discuss thermodynamic formalism in the context of Markov shifts.  We review some results concerning the existence and uniqueness of equilibrium measures.   We also discuss the regularity properties of the pressure function.  Many properties of Markov shifts defined in finite alphabets are different to those for a countable alphabet. The lack of compactness of the latter shifts is a major obstruction for the existence of equilibrium measures and can also result in transience.  Symbolic spaces are of particular importance, not only because of their intrinsic interest, but also because they provide models for uniformly and non-uniformly hyperbolic dynamical systems (see for example  \cite{bo1, Ratner,snunif}). 

Let $S \subset \N$ be the \emph{alphabet} and $\trans$  be a matrix $(t_{ij})_{S \times S}$ of zeros and ones (with no row and no column
made entirely of zeros). The corresponding \emph{symbolic space} is defined by
\[
\Sigma:=\left\{ x\in S^{\N_0} : t_{x_{i} x_{i+1}}=1 \ \text{for every $i
\in \N_0$}\right\},
\]
and the shift map is defined by $\sigma(x_0x_1 \cdots)=(x_1 x_2
\cdots)$. If the alphabet $S$ is finite we say that $(\Sigma,\sigma)$ is a \emph{finite
Markov shift}, if $S$ is (infinite) countable we say that $(\Sigma,\sigma)$ is a \emph{countable Markov shift}.   Given $n\ge 0$, the \emph{word} $x_0\cdots x_{n-1}\in S^n$ is called \emph{admissible} if $t_{x_{i} x_{i+1}}=1$ for every $0\le i\le n-2$.  We will always assume that $(\Sigma, \sigma)$ is  \emph{topologically mixing}, except in Section~\ref{ssec:non-trans} where the consequences of not having this hypothesis are discussed. This is equivalent to the following property: for each pair $a,b \in S$ there exists $N \in \N$ such that for every $n > N$ there is an admissible word $\underline{a}=(a_0 \dots a_{n-1})$ of length $n$ such that $a_0=a$ and $a_{n-1}=b$.
If the alphabet $S$ is finite this is also equivalent to the existence of an integer $N \in \N$ such that every entry of the matrix $\trans^N$ is positive.

We equip $\Sigma$ with the topology generated by the $n$-cylinder sets:
\[ C_{i_0 \cdots i_{n-1}}:= \{x \in
\Sigma : x_j=i_j \text{ for } 0 \le j \le n-1 \}.\] 
We let $C(\Sigma)$ be the set of continuous functions $\phi:I\to \R$.
Given a function
$\phi\colon \Sigma \to\R$, for each $n \ge 1$ we set
\[
V_{n}(\phi) := \sup \left\{|\phi(x)-\phi(y)| : x,y \in \Sigma,\
x_{i}=y_{i} \text{ for } 0 \le i \le n-1 \right\}.
\]
Note that $\phi :\Sigma \to \R$ is continuous if and only if $V_n (\phi) \to 0$. The regularity of the potentials that we consider is fundamental when it comes to proving existence of equilibrium measures as well as recurrence/transience.

\begin{defi}
We say that $\phi:\Sigma \to \R$  has \emph{summable variations} if
$\sum_{n=2}^{\infty} V_n(\phi)<\infty$. 
Clearly, if $\phi$ has summable variations then it is continuous.  We say that $\phi:\Sigma \to \R$ is \emph{weakly H\"older continuous} if $V_n(\phi)$ decays exponentially, that is there exists $C>0$ and $\theta \in (0,1)$ such that $V_n(\phi) < C \theta^n$  for all $n\ge 2$. If this is the case then clearly it has summable variations.
\end{defi}

Note that in this symbolic context, given any symbolic metric, the notions of H\"older and Lipschitz function are essentially the same (see \cite[p.16]{ParPol}).

We say that $\mu$ is a \emph{Gibbs measure} on $\Sigma$ if there exist $K,P\in \R$ such that for every $n\ge 1$, given an $n$-cylinder $C_{i_0 \cdots i_{n-1}}$,
$$\frac1K\le \frac{\mu(C_{i_0 \cdots i_{n-1}})}{e^{S_n\phi(x)-nP}}\le K$$
for any $x\in C_{i_0 \cdots i_{n-1}}$.  We will usually have $P=P(\phi)$.

\begin{rem}
\label{rmk:shift WE}
In the topologically mixing Markov shift $(\Sigma, \sigma)$ case, due to the Markov structure, and since for any conservative measure $m$ satisfies the Poincar\'e Recurrence Theorem, $m$-a.e.  point goes to large scale infinitely often, even in the countable Markov shift case. Hence in our definition of transience, we can drop the weakly expanding requirement.
\end{rem}

\subsection{Compact case} \label{co}
When the alphabet $S$ is finite, the space $\Sigma$ is compact. Moreover, the entropy map $\mu \mapsto h(\mu)$ is upper semi-continuous. Therefore, continuous potentials have equilibrium measures. In order to prove uniqueness  of such measures, regularity assumptions on the potential and a transitivity/mixing assumption on the system are required.
The following is the Ruelle-Perron-Frobenius Theorem (see \cite[p.9]{bo3} and \cite[Proposition 4.7]{ParPol}).
\begin{teo} \label{thm:Ru}
 Let $(\Sigma, \sigma)$ be a topologically mixing  finite Markov shift and let $\phi : \Sigma \to \R$ be a H\"older potential. Then 
\begin{enumerate}[(a)]
\item there exists a $(\phi-P(\phi))$-conformal measure $m_\phi$;
\item there exists a unique equilibrium measure $\mu_\phi$ for $\phi$;
\item there exists a positive 
    function $h_\phi\in L^1(m_\phi)$ such that $L_\phi h_\phi=e^{P(\phi)}h_\phi$ and $\mu_\phi=h_\phi m_\phi$;
\item For every $\psi \in C(\Sigma)$ we have
 $$\lim_{n \to \infty} \left\| e^{-nP(\phi)}  L^n_{\phi} (\psi) - \left( \int \psi \ d \mu_{\phi}  \right) h\right\|_{\infty} =0.$$

\item $m_\phi$ and $\mu_\phi$ are Gibbs measures;
\item the pressure function $t \mapsto P(t\phi)$ is real analytic on $\R$.
\end{enumerate}
\end{teo}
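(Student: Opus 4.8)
The plan is to build everything from the spectral theory of the transfer operator $L_\phi$ acting on the Banach space of H\"older continuous functions on $\Sigma$. The compactness of $\Sigma$ together with the H\"older regularity of $\phi$ yield the crucial \emph{bounded distortion} estimate: since $V_n(\phi)$ decays exponentially, there is a constant $D>0$ with $|S_n\phi(x)-S_n\phi(y)|\le D$ whenever $x,y$ lie in a common $n$-cylinder. This single estimate drives the construction of the eigendata and, ultimately, the spectral gap.

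First I would produce the conformal measure of part (a). The dual operator $L_\phi^*$ acts continuously on the weak-$*$ compact space of Borel probability measures on $\Sigma$, and the map $\nu\mapsto L_\phi^*\nu/\|L_\phi^*\nu\|$ is continuous; a Schauder--Tychonoff fixed-point argument then gives a probability measure $m_\phi$ and $\lambda>0$ with $L_\phi^* m_\phi=\lambda m_\phi$. Identifying $\lambda=e^{P(\phi)}$, via the variational principle or by comparing $L_\phi^n\mathbf{1}$ with cylinder weights, shows $m_\phi$ is $(\phi-P(\phi))$-conformal. Next, for the eigenfunction of part (c), I would examine the Ces\`aro averages $\frac1n\sum_{k=0}^{n-1}\lambda^{-k}L_\phi^k\mathbf{1}$: bounded distortion makes this family uniformly bounded and equicontinuous, so Arzel\`a--Ascoli extracts a positive limit $h_\phi$ satisfying $L_\phi h_\phi=\lambda h_\phi$. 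Then $\mu_\phi:=h_\phi m_\phi$, normalised, is $\sigma$-invariant, giving existence in part (b), and the two-sided bounds on $h_\phi$ combined with conformality of $m_\phi$ yield the Gibbs property (e).

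The main obstacle is the spectral gap underlying parts (b) (uniqueness) and (d) (exponential convergence). I would prove it by a Birkhoff cone-contraction argument: the operator $\lambda^{-1}L_\phi$ maps the cone of positive functions with controlled H\"older oscillation strictly inside itself, and topological mixing guarantees that after finitely many iterates the image has finite diameter in the Hilbert projective metric. Contraction of this metric yields a uniform exponential rate, which upgrades the convergence of part (c) to the operator-norm statement of part (d) and forces the equilibrium measure to be unique, completing (b); uniqueness among all invariant measures then follows from the variational principle together with the Gibbs property.

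Finally, for the analyticity of part (f), I would note that $t\mapsto L_{t\phi}$ is an analytic family of bounded operators on the H\"older space, and by the spectral gap the leading eigenvalue $\lambda(t)=e^{P(t\phi)}$ is simple and isolated for each $t$. Analytic perturbation theory then shows $t\mapsto\lambda(t)$ is real analytic, hence so is $P(t\phi)=\log\lambda(t)$. The delicate points throughout are verifying that the cone is mapped strictly inside itself with finite projective diameter, and checking that the abstract eigenvalue $\lambda$ really equals $e^{P(\phi)}$; both rest on the bounded distortion estimate and on topological mixing.
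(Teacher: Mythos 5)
This statement is quoted in the paper as the classical Ruelle--Perron--Frobenius theorem and is not proved there; it is cited to Bowen and to Parry--Pollicott. Your outline is essentially the standard proof found in those references, and it is correct in its architecture: bounded distortion from the H\"older hypothesis, Schauder--Tychonoff for the eigenmeasure of $L_\phi^*$, Ces\`aro averages plus Arzel\`a--Ascoli for the eigenfunction, the Gibbs property from conformality together with the two-sided bounds on $h_\phi$, and analytic perturbation of the simple isolated leading eigenvalue for part (f). Your choice of a Birkhoff cone contraction for the spectral gap is a legitimate alternative to the Doeblin--Fortet/quasi-compactness route taken in Parry--Pollicott; the cone method gives the exponential rate more directly, while the quasi-compactness route generalises more readily to other function spaces. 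Two points deserve more care than your sketch gives them. First, part (d) asserts convergence for every $\psi\in C(\Sigma)$, not merely for H\"older $\psi$; the spectral gap lives on the H\"older space, so you need to add a density argument (H\"older functions are uniformly dense in $C(\Sigma)$ and the operators $e^{-nP(\phi)}L_\phi^n$ are uniformly bounded on $C(\Sigma)$), at the cost of losing the exponential rate for general continuous $\psi$. Second, uniqueness in (b) does not follow from the variational principle and the Gibbs property quite as quickly as you suggest: the standard argument shows that any equilibrium state must be absolutely continuous with respect to the Gibbs measure $\mu_\phi$ (via the Gibbs bounds and a convexity estimate on entropy plus integral), and then invokes ergodicity of $\mu_\phi$ to conclude equality; that ergodicity itself comes from the mixing hypothesis and the convergence in (d). Neither issue is a fatal gap, but both steps must be supplied to make the proof complete.
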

Part (c) of this theorem implies that these systems are positive recurrent according to our definition 
(this is also the case according to Sarig's definition). Notice also that part (f) of this theorem implies that such systems have no phase transitions.
On the other hand, Hofbauer \cite{Hnonuni} showed that for a particular class of non-H\"older potentials $\phi$, there are phase transitions, and also equilibrium states need not be unique. We describe this example in Section~\ref{ssec:ho-ke}.  We are led to the following natural question:

\textbf{Question:} how much can we relax the regularity assumption on the potential and still have uniqueness of the equilibrium measure/recurrence?

In order to give a partial answer to this question, Walters \cite{wa1} introduced the following class of functions.

\begin{defi}
For $\phi:\Sigma \to \R$, 
we say that $\phi:\Sigma \to \R$ is a \emph{Walters function} if for every $p \in \N$ we have $\sup_{n \ge 1} V_{n+p} (S_n\phi)< \infty$ and
 $$\lim_{p \to \infty} \sup_{n \ge 1} V_{n+p} (S_n\phi)=0.$$
We say that $\phi: \Sigma\mapsto \R$ is a \emph{Bowen function} if $$ \sup_{n \ge 1} V_n(S_n\phi) < \infty.$$
\end{defi}
Note that if $\phi$ is of summable variations then it is a Walters function. Walters showed that if a potential $\phi$ is Walters then it satisfies the Ruelle-Perron-Frobenius Theorem. In particular it has a unique equilibrium measure. Bowen introduced the class of functions we call Bowen in \cite{bo2}.  Note that every Walters function is a Bowen function and that there exist Bowen functions which are not Walters \cite{wa3}.  Bowen functions satisfy conditions (a)-(e) of Theorem~\ref{thm:Ru}, but not necessarily (f).
The following result was proved by Bowen \cite{bo1} and Walters \cite{wa2} and shows that for $(\Sigma, \sigma)$ a finite Markov shift and $\phi$ is a Bowen function, the system is recurrent.
\begin{teo}[Bowen-Walters]
For $(\Sigma, \sigma)$ a finite Markov shift, if $\phi: \Sigma \to \R$ is a Bowen function then there exists a unique equilibrium measure $\mu$ for $\phi$. Moreover, there exists a conservative $(\phi-P(\phi))$-conformal measure and the measure $\mu$ is exact.
\end{teo}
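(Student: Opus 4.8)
The plan is to assemble the three conclusions from the Ruelle--Perron--Frobenius data in Theorem~\ref{thm:Ru}, using the fact (recorded immediately before the statement) that a Bowen function satisfies parts (a)--(e) of that theorem. Existence of a $(\phi-P(\phi))$-conformal measure $m:=m_\phi$ is then immediate from part (a), while existence and uniqueness of the equilibrium state $\mu:=\mu_\phi$ come from part (b), with $\mu=h_\phi m$ by part (c). Thus the only genuinely new points are the \emph{conservativity} of $m$ and the \emph{exactness} of $\mu$, and the proof is organised around these.

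For conservativity I would first note that $\mu$ is a finite $\sigma$-invariant measure and hence conservative: if $W$ were a wandering set with $\mu(W)>0$, then the sets $\{\sigma^{-n}(W)\}_{n\ge 0}$ are pairwise disjoint with $\mu(\sigma^{-n}(W))=\mu(W)$ by invariance, so $\mu\left(\bigcup_{n\ge 0}\sigma^{-n}(W)\right)=\sum_{n\ge 0}\mu(W)=\infty$, contradicting $\mu(\Sigma)<\infty$. I would then transfer this to $m$: by part (e) both $m$ and $\mu$ are Gibbs measures with exponent $P(\phi)$, so comparing the Gibbs bounds on each cylinder shows that the ratios $\mu(C)/m(C)$ are bounded above and below away from $0$; since cylinders generate the Borel $\sigma$-algebra this gives $m\sim\mu$ with $d\mu/dm$ bounded. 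Both $\sigma$-non-singularity and the vanishing of the measure on wandering sets depend only on the measure class, so $m$ inherits non-singularity and conservativity from $\mu$.

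For exactness the engine is the uniform convergence in part (d), which, written for the conformal measure $m$ (normalised so that $\int h_\phi\,dm=1$), reads $\lambda^{-n}L_\phi^n\psi\to h_\phi\int\psi\,dm$ for $\psi\in C(\Sigma)$, where $\lambda=e^{P(\phi)}$. I would introduce the transfer operator of $(\Sigma,\sigma,\mu)$, namely $\widehat L g:=\lambda^{-1}h_\phi^{-1}L_\phi(g h_\phi)$, which satisfies $\widehat L^n g=\lambda^{-n}h_\phi^{-1}L_\phi^n(gh_\phi)$, is an $L^1(\mu)$-contraction, and obeys $\int\widehat L g\,d\mu=\int g\,d\mu$ (this last using $L_\phi^*m=\lambda m$ and $\mu=h_\phi m$). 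Applying part (d) to $\psi=g h_\phi$ for continuous $g$ gives
$$\widehat L^n g=h_\phi^{-1}\,\lambda^{-n}L_\phi^n(g h_\phi)\longrightarrow h_\phi^{-1}\,h_\phi\int g h_\phi\,dm=\int g\,d\mu$$
uniformly, hence in $L^1(\mu)$. A density argument upgrades this to all $g\in L^1(\mu)$: choosing continuous $g_k\to g$ in $L^1(\mu)$, one splits $\|\widehat L^n g-\int g\,d\mu\|_{L^1(\mu)}$ into a contraction term $\le\|g-g_k\|_{L^1(\mu)}$, the convergent term for $g_k$, and an error $\le\|g-g_k\|_{L^1(\mu)}$, and lets $k\to\infty$ after $n\to\infty$. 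Lin's criterion for exactness (a probability-preserving system is exact iff $\widehat L^n g\to\int g\,d\mu$ in $L^1$ for every $g\in L^1$) then yields exactness of $\mu$; here the standing assumption that $(\Sigma,\sigma)$ is topologically mixing is what makes part (d), and hence this conclusion, available.

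The main obstacle is this exactness step: one must set up $\widehat L$ correctly, verify the duality $\int\widehat L g\,d\mu=\int g\,d\mu$ and the $L^1(\mu)$-contraction property, and pass carefully from the uniform convergence supplied by part (d) on the dense subspace $C(\Sigma)$ to $L^1(\mu)$-convergence on all of $L^1(\mu)$ before invoking Lin's criterion. By contrast, once the Gibbs comparison of part (e) is in hand, conservativity reduces to the elementary invariance computation above together with the observation that conservativity is a measure-class invariant.
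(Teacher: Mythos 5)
Your proposal is essentially correct, but note that the paper does not prove this statement at all: it is quoted as a known result, with existence and uniqueness of the equilibrium state attributed to Bowen \cite{bo1} and the convergence properties of the Ruelle operator (from which conservativity and exactness follow) to Walters \cite{wa2}. Your reconstruction --- assembling everything from parts (a)--(e) of Theorem~\ref{thm:Ru}, which the paper asserts hold for Bowen functions --- is a sound self-contained route, and both halves (the Gibbs comparison giving $m\sim\mu$ with bounded density, hence conservativity of $m$ from conservativity of the finite invariant measure $\mu$; and the $L^1(\mu)$-convergence of the normalised transfer operator plus Lin's criterion giving exactness) are the standard arguments one finds in the cited sources. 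Two small points deserve care. First, part (d) as printed in the paper has $\int\psi\,d\mu_\phi$ where the consistent normalisation (forced by $\int \lambda^{-n}L_\phi^n\psi\,dm=\int\psi\,dm$) is $\int\psi\,dm_\phi$ with $\int h_\phi\,dm_\phi=1$; you silently use the latter, which is the correct reading, but you should say so, since with the literal statement your limit would come out as $\int g h_\phi\,d\mu$ rather than $\int g\,d\mu$. Second, dividing by $h_\phi$ to pass from $\lambda^{-n}L_\phi^n(gh_\phi)$ to $\widehat L^n g$ requires $\inf h_\phi>0$, which part (c) alone does not state; however, your own Gibbs comparison in the conservativity step already yields that $d\mu/dm=h_\phi$ is bounded above and below on cylinders, hence a.e., so this is easily patched. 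Finally, Lin's criterion is an external input not present in the paper's references and should be cited explicitly.
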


Bowen \cite{bo1} showed the existence of a unique equilibrium measure and Walters \cite{wa2} described the convergence properties of the Ruelle operator of a Bowen function. Recently, Walters \cite{wa3} defined a new class of functions that he called  `Ruelle functions' which includes potentials having more than one equilibrium measure.   

\subsection{Non-compact case} \label{ssec:nonc}

The definition of pressure in the case that the alphabet $S$ is finite (compact case) was introduced by Ruelle \cite{ru1}. In the (non-compact) case when the alphabet $S$ is infinite the situation is more complicated because the definition of pressure using $(n, \epsilon)-$ separated sets depends upon the metric  and can be different even for two equivalent metrics. Mauldin and Urba\'nski \cite{MUifs} gave a definition of pressure for  shifts on countable alphabets satisfying certain combinatorial assumptions.  Later, Sarig \cite{Sartherm}, generalising previous work by Gurevich \cite{Gushiftent, Gutopent}, gave a definition of pressure that satisfies the Variational Principle  for any topologically mixing countable Markov shift. This definition and the one given by Mauldin and Urba\'nski coincide for systems where both are defined.

Let $(\Sigma, \sigma)$ be a topologically mixing \emph{countable} Markov shift. This is a non-compact space.  Fix a symbol $i_0$ in the alphabet $S$ and
let $\phi \colon \Sigma \to \R$ be a Walters potential.  We refer to the corresponding cylinder $C_{i_0}$ as the \emph{base set} and let
\begin{equation}
Z_n(\phi, C_{i_0}):=\sum_{x:\sigma^{n}x=x} e^{S_n\phi(x)} \mathbb{1}_{C_{i_{0}}}(x),
\label{eq:Zn}
\end{equation}
where $\mathbb{1}_{C_{i_{0}}}$ is the characteristic function of the
cylinder $C_{i_{0}} \subset \Sigma$.  Also, defining  \begin{equation*}
r_{C_{i_0}}(x) := \mathbb{1}_{C_{i_0}}(x) \inf \{ n \ge 1 : \sigma^{n}x \in C_{i_0} \},
\end{equation*}
we let
\begin{equation}
Z_n^*(\phi, C_{i_0}):=\sum_{x:\sigma^{n}x=x} e^{S_n\phi(x)} \mathbb{1}_{\{r_{C_{i_0}} \}}(x),
\label{eq:Zn*}
\end{equation}
The so-called \emph{Gurevich pressure} of $\phi$ is defined by
\[
 P_G(\phi) := \lim_{n \to
\infty} \frac{1}{n} \log Z_n(\phi, C_{i_0}).
\]
This limit is proved to exist by Sarig  \cite[Theorem 1]{Sartherm}.
Since $(\Sigma, \sigma)$ is topologically mixing, one can show that $P(\phi)$ does not depend on the base set.
This notion of pressure coincides with the usual definition of pressure when the alphabet $S$ is finite and also satisfies the Variational Principle (see \cite{Sartherm}), i.e.,
$$P_G(\phi)=P(\phi).$$


Sarig showed in \cite[Theorem 1]{Sarnull} that exactly three different kinds of behaviour are possible for a Walters potential\footnote{Actually, he considered potentials of summable variations but the proofs of his results need no changes if it is assumed that the potential is a Walters function, see \cite{sanotes}.} $\phi$ of finite Gurevich pressure.  We adopt his definitions of transience and recurrence for a moment:
\label{p:sym conds}
\begin{enumerate}
\item[I.] The potential $\phi$ is  \emph{recurrent} if
\begin{equation}
 \sum_{n \ge 1} e^{-nP(\phi)} Z_n(\phi, C_{i_0})
 = \infty. \label{eq:Sar rec}
 \end{equation}
Here there exists a conservative $(\phi-P(\phi))$-conformal measure $m$.  If, moreover
\begin{enumerate}
\item $ \sum_{n \ge 1} ne^{-nP(\phi)} Z_n^*(\phi, C_{i_0}) < \infty$ then there exists an equilibrium measure for $(\Sigma, \sigma, \phi)$ absolutely continuous with respect to $m$. This is the \emph{positive recurrent} case;
\item $ \sum_{n \ge 1} ne^{-nP(\phi)} Z_n^*(\phi, C_{i_0})= \infty$ then there is no finite equilibrium measure absolutely continuous with respect to $m$.  This is the \emph{null recurrent} case;
 \end{enumerate}
\item[II.] The potential $\phi$ is  \emph{transient} if
\[ \sum_{n \ge 1} e^{-nP(\phi)} Z_n(\phi, C_{i_0})
 < \infty.\]  In this case there is no conservative $(\phi-P(\phi))$-conformal measure.  
\end{enumerate}

Cases I(a) and I(b) fit our definition (Definition~\ref{def:trans_new}) of positive and null recurrence respectively, and Case II fits our definition of transience.

%

\begin{rem}
If a potential $\phi$ is transient then it either has no conformal measure or  a dissipative conformal measure. Examples of both cases have been constructed by Cyr \cite[Section 5]{Cyr_thes}.  Moreover, examples are also given where there is more than one $\phi$-conformal measure in the transient setting.
\end{rem}

Recently Cyr and Sarig \cite{CyrSar} gave a characterisation of transient potentials which involves a phase transition of some pressure function, indeed they proved:


\begin{prop}[Cyr and Sarig] \label{prop:CyrSar}
The potential $\phi: \Sigma \to \R$ is transient if and only if for each $i \in S$ there exists $t_0 \in \R$ such that $P(\phi + t 1_{C_i}) = P(\phi)$ for every $t \le t_0$ and
$P(\phi + t 1_{C_i}) > P(\phi)$ for $t > t_0$.
\end{prop}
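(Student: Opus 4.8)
The plan is to reduce the statement to a computation of the Gurevich pressure of the perturbed potential $\phi+t\mathbb{1}_{C_i}$ via the first-return (renewal) decomposition, and then to read off transience directly from Sarig's criterion \eqref{eq:Sar rec}. Fix $i\in S$ and abbreviate $\lambda:=e^{P(\phi)}$, $Z_n:=Z_n(\phi,C_i)$, $Z_n^*:=Z_n^*(\phi,C_i)$, and $F(z):=\sum_{n\ge1}Z_n^*z^n$.

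First I would record the renewal identity. Every periodic point counted by $Z_n$ decomposes uniquely into a concatenation of first-return loops to $C_i$, and the Birkhoff sum is additive along this decomposition, so the weights factorise and one obtains
\[ \sum_{n\ge1}Z_n z^n=\frac{F(z)}{1-F(z)} \]
wherever $F(z)<1$. The crucial observation is that a first-return loop meets $C_i$ exactly once, so $S_n\mathbb{1}_{C_i}\equiv1$ on the points counted by $Z_n^*$; hence $Z_n^*(\phi+t\mathbb{1}_{C_i})=e^tZ_n^*$, the first-return series of the perturbed potential is $F_t(z):=\sum_n Z_n^*(\phi+t\mathbb{1}_{C_i})z^n=e^tF(z)$, and the same identity gives $\sum_n Z_n(\phi+t\mathbb{1}_{C_i})z^n=F_t(z)/(1-F_t(z))$.

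Next I would compute $p_i(t):=P(\phi+t\mathbb{1}_{C_i})$. By definition of the Gurevich pressure, $e^{-p_i(t)}$ is the radius of convergence of $\sum_n Z_n(\phi+t\mathbb{1}_{C_i})z^n=F_t/(1-F_t)$, and this radius is the smaller of the radius of convergence of $F_t$ (which equals that of $F$, and by Pringsheim's theorem is a genuine singularity on the positive axis) and the smallest positive solution of $F_t(z)=1$, i.e. of $e^tF(z)=1$. Writing $L:=F(1/\lambda)=\sum_n Z_n^*\lambda^{-n}$, which is finite since the radius of convergence of $F$ is at least $1/\lambda$, a short comparison of these two mechanisms gives $p_i(t)=P(\phi)$ exactly when $e^tL\le1$ and $p_i(t)>P(\phi)$ when $e^tL>1$; the transition therefore sits at $t_0=\log(1/L)$.

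Finally I would invoke Sarig's trichotomy. By the renewal identity the recurrence series equals $\sum_n Z_n\lambda^{-n}=L/(1-L)$, so \eqref{eq:Sar rec} shows that $\phi$ is transient precisely when $L<1$. In that case one first checks that the radius of convergence of $F$ is exactly $1/\lambda$ (otherwise $F/(1-F)$ would be analytic at $1/\lambda$, contradicting $p_i(0)=P(\phi)$), and then the previous step produces exactly the asserted profile of $p_i$ with $t_0=\log(1/L)$. For the converse I would argue contrapositively: if $\phi$ is recurrent then $L=1$, and in the positive recurrent case the equilibrium state $\mu$ satisfies $\mu(C_i)>0$, so convexity of the pressure together with $p_i'(0)=\mu(C_i)$ forces $p_i(t)<P(\phi)$ strictly for $t<0$, destroying the flat piece and hence violating the right-hand profile. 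The main obstacle—and the genuinely delicate part of the argument—is the borderline null recurrent case, where $F'(1/\lambda)=\infty$, i.e. $\sum_n nZ_n^*\lambda^{-n}=\infty$, the radius of $F$ collapses to $1/\lambda$, and the profile degenerates so that the transition occurs exactly at $t_0=0$; distinguishing this threshold from the transient one (where the flat region extends strictly past $0$) is exactly the point at which the finiteness of $\sum_n nZ_n^*\lambda^{-n}$ must be brought to bear.
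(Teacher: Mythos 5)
The paper itself offers no proof of this proposition: it is quoted from Cyr--Sarig [CS], so your sketch can only be measured against their published argument. Your overall strategy --- first-return generating functions, the exact identity $Z_n^*(\phi+t\mathbb{1}_{C_i})=e^tZ_n^*(\phi)$, and locating the singularity of $F_t/(1-F_t)$ --- is the right circle of ideas and is close in spirit to theirs. The forward direction (transient implies the stated profile, with $t_0=-\log L>0$) is essentially sound, modulo one technical caveat: the renewal identity $\sum_nZ_nz^n=F/(1-F)$ is exact only for potentials constant on $1$-cylinders. For a general Walters potential a $k$-fold loop decomposition carries a distortion factor $e^{\pm kV}$, so transience only yields $L<e^{V}$ rather than $L<1$, and pinning the threshold strictly to the right of $0$ requires the extra care that Sarig's discriminant machinery (inducing, where $\mathbb{1}_{C_i}$ lifts exactly to the constant $1$) is designed to supply.

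The genuine gap is in the converse, and your own analysis exposes it. As you observe, in the null recurrent case $\sum_nnZ_n^*\lambda^{-n}=\infty$ forces $R=1/\lambda$ and $F(1/\lambda)=1$, whence $P(\phi+t\mathbb{1}_{C_i})=P(\phi)$ for all $t\le0$ and $P(\phi+t\mathbb{1}_{C_i})>P(\phi)$ for $t>0$: the profile in the statement holds with $t_0=0$. Since the statement as written only asks for $t_0\in\R$, a null recurrent potential satisfies the right-hand side, so the ``only if'' direction cannot be closed as stated --- the condition must be read, as in Cyr--Sarig, with $t_0>0$, i.e.\ the flat piece must extend strictly past $0$. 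Announcing that the finiteness of $\sum_nnZ_n^*\lambda^{-n}$ ``must be brought to bear'' does not repair this: no further information can separate two cases that both satisfy the stated condition. Your positive recurrent argument has the same defect: the step $p_i'(0)=\mu(C_i)$ presupposes differentiability of $p_i$ at $0$, which fails precisely when $R=1/\lambda$. For instance, a renewal-shift potential, constant on $1$-cylinders, with $Z_n^*\lambda^{-n}\asymp n^{-3}$ is positive recurrent ($L=1$ and $\sum_nnZ_n^*\lambda^{-n}<\infty$), yet Bernoulli measures supported on long first-return loops show, via Abramov, that $p_i(t)=P(\phi)$ for every $t\le0$; here $D^-p_i(0)=0$ while $D^+p_i(0)\ge\mu(C_i)>0$, so $p_i$ has a corner at $0$ and the convexity argument collapses. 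Both recurrent subcases are therefore compatible with the flat ray $(-\infty,0]$, and the only correct discriminant --- which your write-up never actually establishes --- is whether that ray crosses $0$ strictly.
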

Moreover, Cyr \cite{Cyr_CMS} proved that, in a precise sense, most countable Markov shifts have at least one transient potential.  We could, in principle, use this as our definition of transience, but outside the domain of Markov shifts it has the disadvantages that it is unclear what kind of set should replace $C_a$, and it would appear to be very hard to check in any case.   Also the link between the conditions in Proposition~\ref{prop:CyrSar} and measures has not been established outside the Markov shift setting, which makes it difficult to interpret the condition in an ergodic theory context.

We conclude this section with a very important example of a countable Markov shift, the so called \emph{renewal shift}.
Let $S=\{0,1,2, \dots \}$ be a countable alphabet. Consider the
transition matrix $A=(a_{ij})_{i,j \in S}$ with $a_{0,0}= a_{0,n}=
a_{n,n-1}=1$ for each $n \ge 1$ and with all other entries equal to
zero. The \emph{renewal shift} is the (countable) Markov shift
$(\Sigma_R, \sigma)$ defined by the transition matrix $A$, that~is,
the shift map $\sigma$ on the space
\[
\Sigma_R := \left\{ (x_i)_{i \ge 0} : x_i \in S \text{ and } a_{x_i
x_{i+1}}=1 \text{ for each } i \ge 0\right\}.
\]
The \emph{induced system} $(\Sigma_I,\sigma)$ is defined as the
full-shift on the new alphabet given by
$\{ C_{0n(n-1)(n-2) \cdots 1}: n \ge
1 \}$. 
Given a function $\phi \colon \Sigma_R \to\R $ with summable
variation we define a new function, the \emph{induced potential},  $\Phi \colon \Sigma_R
\to\R$ by 
\begin{equation}
\Phi(x) := \sum_{k=0}^{r_{C_{0}}(x)-1}\phi(\sigma^{k} x),
\label{eq:ind pot}
\end{equation}
where the first return map $r_{C_{0}}$ is defined as above.
Sarig \cite{Sarphase} proved that if $\phi: \Sigma_R \to \R$ is a potential of summable variations, bounded above, with finite pressure and such that the induced potential
$\Phi$ is weakly H\"older continuous then there exists $t_c >0$ such that
\begin{equation*}
P(t\phi)=
\begin{cases}
\textrm{strictly convex and real analytic } & \textrm{ if } t \in [0, t_c),\\
At & \textrm{ if } t >t_c,
\end{cases}
\end{equation*}
where $A= \sup \{ \int \phi \ d \mu : \mu \in \mathcal{M} \}$.
This result is important since several of the examples known to exhibit phase transitions can be modelled by the renewal shift. Indeed, this is the case for the interval examples discussed in  Sections~\ref{ssec:ho-ke}--\ref{ssec:ma-po}.

\subsection{Non topologically mixing systems} \label{ssec:non-trans}
All the results we have discussed so far are under the assumption that the systems are topologically mixing. This is a standard irreducibility  hypothesis.  As we show below, it is easy to construct counterexamples to all the previous theorems when there is no mixing assumption.

Consider the dynamical system $(\Sigma_{0,1} \sqcup \Sigma_{2,3}, \sigma)$, where $\Sigma_{i,j}$ is the full-shift on the alphabet $\{i,j\}$. It is easy to see that the topological entropy of this system is equal to $\log 2$. Moreover, there exist two invariant measures of maximal entropy: the $(1/2, 1/2)$-Bernoulli measure supported in $\Sigma_{0,1}$ and
the $(1/2, 1/2)$-Bernoulli measure supported in $\Sigma_{2,3}$.  Therefore, the constant (and hence H\"older) potential $\phi(x)=0$ has two equilibrium measures. Actually, it is possible to construct a locally constant  potential exhibiting phase transitions. Let
\begin{equation*}
\psi(x)=
\begin{cases}
-1 & \textrm{ if } x \in \Sigma_{0,1},\\
-2 & \textrm{ if } x \in \Sigma_{2,3}.
\end{cases}\end{equation*}
The pressure function has the following form
\begin{equation*}
p_\psi(t)=
\begin{cases}
-t + \log 2 & \textrm{ if } t \ge 0;\\
-2t + \log 2 & \textrm{ if } t < 0.
\end{cases}\end{equation*}
Therefore the pressure exhibits a phase transition at $t=0$.  For $t> 0$ the equilibrium state for $t\psi$ is the
$(1/2, 1/2)$-Bernoulli measure supported on $\Sigma_{0,1}$ and for $t<0$ the equilibrium state for $t\psi$ is the
$(1/2, 1/2)$-Bernoulli measure supported on $\Sigma_{2,3}$.  For $t=0$ these measures are both equilibrium states.  Note that in both cases these measures are also $(t\psi-p_\psi(t))$-conformal, so the phase transitions here are not linked to transience.

Phase transitions caused by the  non mixing structure of the system also appear in the case of interval maps. Indeed, the renormalisable examples studied by Dobbs \cite{Dobphase} are examples of this type.

\section{The interval case}
\label{sec:intervals}

In this section we describe examples of systems of interval maps and potentials with phase transitions and explain how our definition of recurrence/transience is an improvement on alternative notions there.

The situation in the compact interval context is different from that of the compact symbolic case in that   rather smooth potentials can have more than one equilibrium measure.  All the examples we consider are such that entropy map is upper semi-continuous.  Since the interval is compact, weak$^*$ compactness of the space of invariant probability measures implies that every continuous potential has (at least) one equilibrium measure. The study of phase transitions in the context of topologically mixing interval maps is far less developed that in the case of Markov shifts. 
We review some of these examples.

\subsection{Hofbauer-Keller} \label{ssec:ho-ke}
The following example was constructed by Hofbauer and Keller \cite{HofKel_pw} based on previous work in the symbolic setting by Hofbauer \cite{Hnonuni}. We will present it defined in a half open interval, but one can also think of this as a dynamical system on a compact set, namely the circle.

The dynamical system considered is the angle doubling map $f: [0,1) \mapsto [0,1)$  defined  by $f(x)=2x\ (\text{mod } 1)$.  Typically this map is studied via its relation to the full shift on two symbols, so the continuity of potentials is only required for the symbolic version of the potential.   Given a sequence of real numbers $(a_k)_{k\in \N_0}$ such that $\lim_{k\to \infty}a_k=0$, we define the potential $\phi$ by
\begin{equation*}
\phi(x)= \begin{cases} a_k  & \text{ if } x\in [2^{-k-1},2^{-k}),\\
0 & \text{ if } x=0.
\end{cases}
\end{equation*}
Notice that this potential is continuous on $[0,1)$ equipped with the metric induced by the standard metric on the full shift on two symbols.

Let $F$ be the first return map to $X=[1/2, 1)$ with return time $\tau$.  So for $X_n:=\{\tau\}$, the induced potential $\Phi$ (see \eqref{eq:ind pot}) takes the value $$s_n:= \sum_{k=0}^{n-1} a_k.$$
Figure \ref{table:hof} summarises the possible behaviours of the thermodynamic formalism depending on the sums $s_n$. Note that there was mistake\footnote{The third entrance in the column of \emph{Gibbs measures} was \emph{no} in Hofbauer's \cite{Hnonuni} and it should have been  \emph{yes}.} in a similar table in the original paper \cite{Hnonuni}, corrected by Walters in \cite[p.1329]{wa2}.  In the final column we apply our definition of recurrence/transience.  The first four entries in that column follow directly from results of \cite{Hnonuni} while the final entry follows from Lemma~\ref{lem:Hof trans} below.


\begin{figure}[ht]
\unitlength=5mm
\begin{picture}(20,12)(3,0)
\thinlines
\put(0,0){\line(0,1){8}}
\put(0,0){\line(1,0){26}}
\put(0.2,0.8){$\sum_n e^{s_n} < 1$}
\put(0,2){\line(1,0){26}}
\put(0.2,3.3){$\sum_n e^{s_n} = 1$}
\put(0,5){\line(1,0){26}}
\put(0.2,6.3){$\sum_n e^{s_n} > 1$}
\put(0,8){\line(1,0){26}}

\put(3.9,0){\line(0,1){8}}
\put(3.9,3.5){\line(1,0){22.1}}
\put(3.9,6.5){\line(1,0){22.1}}


\put(4.4,2.5){\small $\sum_n (n+1)e^{s_n} = \infty$}
\put(4.4,4){\small $\sum_n (n+1)e^{s_n} < \infty$}
\put(4.4,5.5){\small $\sum_k a_k = \infty$}
\put(4.4,7){\small $\sum_k a_k < \infty$}

\put(10,0){\line(0,1){12}}
\put(13.5,0){\line(0,1){12}}
\put(16.8,0){\line(0,1){12}}
\put(21.2,0){\line(0,1){12}}
\put(26,0){\line(0,1){12}}

\put(10.4,11){\small }
\put(10.4,10){\small Pressure}
\put(10.4,9){\small $P(\phi)$}

\put(10.3,7){$P(\phi) > 0$}
\put(10.3,5.5){$P(\phi) > 0$}
\put(10.3,4){$P(\phi) = 0$}
\put(10.3,2.5){$P(\phi) = 0$}
\put(10.3,1){$P(\phi) = 0$}

\put(14,11){\small $\mu_\phi$ is}
\put(14,10){\small a Gibbs}
\put(14,9){\small measure}

\put(14.6,7){\small yes}
\put(14.6,5.5){\small no}
\put(14.6,4){\small yes}
\put(14.6,2.5){\small no}
\put(14.6,1){\small no}

\put(17.3,11){\small $\phi$ has a}
\put(17.3,10){\small unique equi-}
\put(17.3,9){\small librium state}

\put(18.4,7){\small yes}
\put(18.4,5.5){\small yes}
\put(18.4,4){\small no}
\put(18.4,2.5){\small yes}
\put(18.4,1){\small yes}

\put(21.4,11){\small $\phi$ is }
\put(21.4,10){\small $+$ve recurrent/}
\put(21.4,9){\small transient}

\put(21.6,7){\small $+$ve recurrent}
\put(21.6,5.5){\small $+$ve recurrent}
\put(21.6,4){\small $+$ve recurrent}
\put(21.6,2.5){\small null recurrent}
\put(21.6,1){\small transient}

\put(10,12){\line(1,0){16}}
\end{picture}
\caption{The first two columns summarise results in \cite{Hnonuni}: Equation (2.6) and Section 5.  The final column applies Definition~\ref{def:trans_new}}
\label{table:hof}
\end{figure}

We can make choices of $(a_n)_n$ so that the pressure function has the form: \begin{equation}
p_\phi(t)=
\begin{cases}
\textrm{strictly convex and real analytic } & \textrm{ if } t \in [0, 1),\\
0 & \textrm{ if } t >1.
\end{cases}\label{eq:Hof graph}
\end{equation}
The pressure is not analytic at $t=1$.  The real analyticity of the pressure follows, for instance, from \cite{Sarphase}. The general strategy to prove this type of result is to prove that the transfer operator is quasicompact. That is, to show that the essential spectral radius is strictly smaller  than the spectral radius. This means that except for the spectrum inside a small disc the operator behaves like a compact operator (where the spectrum consists of isolated eigenvalues of finite multiplicity). Since the leading eigenvalue corresponds to the exponential of the pressure function, classic perturbation arguments allow for the proof of real analyticity of the pressure.

Moreover, we can choose $(a_k)_k$ so that:
\begin{itemize}
\item the map $t\mapsto P(t\phi)$ is differentiable at $t=1$ where $\phi$ has  only one equilibrium measure (the Dirac delta at zero).  This is the fourth row in Figure~\ref{table:hof};
\item  the map $t\mapsto P(t\phi)$ has a first order phase transition at $t=1$, i.e., is not differentiable at $t=1$.  Here and $\phi$ has two equilibrium states, one is the Dirac delta at zero and the other  can be seen as the projection of the Gibbs measure $\mu_{\Phi}$, the equilibrium state for $\Phi$. This is the third row in Figure~\ref{table:hof}.
\end{itemize}

Following \cite{HofKel_pw}, for $\alpha>0$ we could choose our sequence $(a_k)_k$ to be asymptotically like $\alpha \log \left( \frac{k+1}{k+2} \right)$ for all large $k$.  The particular choice of $\alpha$ separates the two cases above. 

  The following lemma shows that by our definition of recurrence, the phase transition here corresponds to a switch from recurrence to transience.

\begin{lema}
If $(a_k)_k$ are chosen so that $\sum_ne^{s_n}<1$, then $(X,f, \phi)$ is transient.  In fact, there exists a there exists a $(\phi-P(\phi))$-conformal measure, but any such measure is dissipative.
\label{lem:Hof trans}
\end{lema}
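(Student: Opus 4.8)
The plan is to use the fact that, since $\phi$ is constant on each dyadic interval $I_k:=[2^{-k-1},2^{-k})$ while $f$ maps $I_k$ affinely onto $I_{k-1}$ for $k\ge 1$ and $I_0=[1/2,1)$ onto $[0,1)$, the system is Markov and is (conjugate to) the renewal shift, with the induced potential taking the constant value $s_n$ on $\{\tau=n\}$. Thus all Birkhoff sums over Markov pieces are exact: $S_k\phi\equiv s_{k+1}-a_0$ on $I_k$ (for $k\ge1$) and $S_n\phi\equiv s_n$ on $\{\tau=n\}$. By the results of \cite{Hnonuni} summarised in Figure~\ref{table:hof}, $P(\phi)=0$, so $(\phi-P(\phi))$-conformal simply means $L_\phi^* m=m$; I also record that $\sum_n e^{s_n}<1$ is exactly Sarig's transience criterion \eqref{eq:Sar rec} for the associated renewal shift, since here $Z_n^*(\phi,C_{i_0})=e^{s_n}$ and $\lambda=e^{P(\phi)}=1$.

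For existence I would build a conformal measure by hand, concentrated on the grand orbit $G:=\bigcup_{n\ge 0}f^{-n}(0)$ of the fixed point $0$. For $z\in G$ put $N(z):=\min\{n\ge 0: f^n(z)=0\}$ and $w_z:=e^{S_{N(z)}\phi(z)}$, and set
$$m:=\sum_{z\in G}w_z\,\delta_z.$$
Since $G$ is backward invariant and the cocycle identity $S_{N(z)}\phi(z)=\phi(z)+S_{N(f(z))}\phi(f(z))$ gives $w_z=e^{\phi(z)}w_{f(z)}$, a direct computation yields $(L_\phi^* m)(g)=\sum_{z\in G}e^{\phi(z)}w_{f(z)}g(z)=m(g)$, i.e.\ $L_\phi^* m=m$. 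The step requiring genuine work is the finiteness of $m$: organising $G$ by its excursions between successive visits to the base $I_0$ — equivalently, describing $G\cap I_0$ as the backward orbit of the escaping point $1/2$ under the inverse branches of the first return map $F$, each branch carrying weight $e^{s_n}$ — the total mass is controlled by the renewal (Green's function) series $\sum_{r\ge 0}\left(\sum_n e^{s_n}\right)^r$, which converges precisely because $\sum_n e^{s_n}<1$. Establishing this reduction, and hence $m(X)<\infty$, is the main obstacle, and is where transience is genuinely used.

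To show that \emph{every} $\phi$-conformal measure is dissipative, let $m$ be any such measure. Using that $f^k\colon I_k\to I_0$ and $F=f^n\colon\{\tau=n\}\to I_0$ are bijections on which $S_k\phi$ and $S_n\phi$ are constant, the conformality relation $m(f^n(U))=\int_U e^{-S_n\phi}\,dm$ (valid when $f^n|_U$ is injective) forces $m(I_k)=e^{s_{k+1}-a_0}\,m(I_0)$ and $m(\{\tau=n\})=e^{s_n}m(I_0)$. As $\{\tau=n\}_{n\ge1}$ partitions $I_0\setminus\{1/2\}$, the point $1/2$ being the only non-returning point of the base, summing gives
$$m(\{1/2\})=m(I_0)\Big(1-\sum_n e^{s_n}\Big).$$
If $m\neq 0$ then $m(I_0)>0$, for otherwise $m(I_k)=0$ for all $k$ and $m$ is carried by $\{0\}$, contradicting the fact that $\delta_0$ is not conformal (indeed $L_\phi^*\delta_0=\delta_0+e^{a_0}\delta_{1/2}$). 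Hence $m(\{1/2\})>0$. But $\{1/2\}$ is a wandering set: since $f^i(1/2)=0\neq 1/2$ for every $i\ge 1$, the preimages $\{f^{-j}(\{1/2\})\}_{j\ge 0}$ are pairwise disjoint. Thus $m$ charges a wandering set and is dissipative.

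Combining the two halves, a $\phi$-conformal measure exists but no $\phi$-conformal measure is conservative; in particular there is no finite weakly expanding conservative $(\phi-P(\phi))$-conformal measure. By Definition~\ref{def:trans_new}, $(X,f,\phi)$ is transient.
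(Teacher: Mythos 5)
Your proof is correct, and the dissipativity half is essentially the paper's own argument: both use conformality on a countable partition into sets mapped bijectively onto a fixed set with constant Birkhoff sums, and conclude from $\sum_n e^{s_n}<1$ that any conformal measure must place an atom on a wandering point. (The paper uses the global partition $[0,1)=\{0\}\cup\bigcup_k[2^{-k-1},2^{-k})$ to get $\nu(\{0\})=1-\sum_n e^{s_n}>0$ and then pushes the atom to $1/2$; you use the first-return partition of the base to land on $1/2$ directly, and you are slightly more careful in ruling out the degenerate case where $m$ is carried by $\{0\}$, which the paper handles by normalising to a probability measure.) Where you genuinely diverge is the existence half. The paper simply invokes Walters' Ruelle-operator theorem \cite{waTA} for the continuous symbolic potential on the compact full shift, which produces a finite eigenmeasure of $L_\phi^*$ with eigenvalue $e^{P(\phi)}=1$ at no cost; you instead build the conformal measure explicitly as an atomic measure on the grand orbit of $0$. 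Your construction is sound: the eigenmeasure identity follows from the cocycle relation exactly as you say, and the finiteness reduces, as you indicate, to the geometric series $\sum_{r}\bigl(\sum_n e^{s_n}\bigr)^r$, because each itinerary of excursions through the branches of $F$ determines a unique preimage of $1/2$ (every branch being a bijection onto the base). You leave that counting as a sketch, and it is the only place where real work remains, but the reduction is accurately described and does close. The trade-off: the paper's route is shorter and gets finiteness for free from compactness and boundedness of $\phi$, while yours is self-contained, exhibits the conformal measure concretely (showing in passing that it can be taken purely atomic and supported on $\bigcup_n f^{-n}(0)$, which the paper only alludes to after the proof), and makes the role of the condition $\sum_n e^{s_n}<1$ in the existence statement transparent.
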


\begin{proof}
Since $\phi$ gives rise to a continuous potential for the full shift on two symbols, \cite{waTA} \textbf gives us a conformal measure $\nu$ such that $L_{\phi}^*\nu=e^{P(\phi)}\nu=\nu$, since $P(\phi)=0$.   In other words, $\nu$ is a $(\phi-P(\phi))$-conformal measure. Note that $\nu$ must be a finite measure, so we will assume that it is a probability measure. We will show that $\nu$ must be dissipative.

Conformality implies that for $n\ge1$, we have
$$1=\nu([0,1))=\nu(f^n(X_n))=\int_{X_n}e^{-s_n}~d\nu,$$
so $\nu(X_n)=e^{s_n}$ which implies
\begin{equation}1=\nu([0,1))=\nu(\{0\})+\sum_n e^{s_n}.\label{eq:noconf Hof}
\end{equation}
Therefore $\nu(\{0\})>0$.  Moreover, since we can similarly show that $\nu(\{1/2\})=\nu(\{0\})e^{a_0}>0$,  and since
$\left\{f^{-n}(1/2)\right\}_{n\ge 0}$ is a wandering set, it follows that $\nu$ is dissipative.
\end{proof}

We also note that in the above proof,  for the induced potential $\Phi$, we can show that $P(\Phi)<0$, from which we can give an alternative proof that any $\phi$-conformal measure must be supported on $\left\{f^{-n}(0)\right\}_{n\ge 0}$ using the techniques in Sections~\ref{sec:no conf}  and \ref{sec:ex}.

Now let us compare our definition of recurrence with \eqref{eq:Sar rec}.  Let us suppose that $\sum_ne^{s_n}<1$, so by the lemma above, our system is transient.   The pressure here is clearly zero, so it would only make sense to put this value into the computation of recurrence in \eqref{eq:Sar rec}.   If we choose our base set to be $[1/2,1)$, then the definition of transience in \eqref{eq:Sar rec} fits with ours.  However, if we choose our base set to be, for example, $A:=[0,1/2)$, then \begin{equation}
Z_n(\phi, A)\ge e^{-nP(\phi)}e^{S_n\phi(0)}=1,
\label{eq:HoKe bad def}
\end{equation}
 which suggests that the system is actually recurrent.  So blindly applying the computation of \eqref{eq:Sar rec} here leads to recurrence being ill-defined.  We note that the same kind of argument can be made against the application of Proposition~\ref{prop:CyrSar}.

\subsection{Manneville-Pomeau} \label{ssec:ma-po}
The following example was introduced by Manneville and Pomeau in \cite{ManPom}. It is one of the simplest examples of a non-uniformly hyperbolic map. It is expanding and it has a parabolic fixed point at $x=0$.   For these systems and for the type of potential we choose below, there are the same issues with the definition of recurrence as in Section~\ref{ssec:ho-ke}.

We give the form studied in \cite{LivSaVai}.  For $\alpha >0$, the map is defined by
\begin{equation} f(x)=\begin{cases} x(1+2^\alpha x^\alpha) & \text{if } x \in [0, 1/2),\\
2x-1 & \text{if } x \in [1/2, 1).\end{cases}
\end{equation}

The pressure function of the potential $-\log |f'|$ has the following form (see, for example, \cite{Sarphase}),
\begin{equation*}
p(t)=
\begin{cases}
\textrm{strictly convex and real analytic } & \textrm{ if } t \in [0, 1),\\
0 & \textrm{ if } t >1,
\end{cases}
\end{equation*}
where, for brevity we let
$$p(t):=P(-t \log |f'|).$$
(We use this notation throughout for this particular kind of potential.)
If $\alpha \in (0,1)$  then the map $f$ has a probability invariant measure absolutely continuous with respect to the Lebesgue measure, which is also an equilibrium state for $-\log|Df|$, and the pressure function is not differentiable at $t=1$.  This setting is analogous to that in the penultimate row of Table~\ref{table:hof}.  On the other hand, if $\alpha \ge 1$ then there is no absolutely continuous invariant probability measure and the pressure function is differentiable at $t=1$.  This setting is analogous to that in the last row of Table~\ref{table:hof}.

The value of $\alpha$ determines the class of differentiability of the map $f$ and determines the amount of time `typical' orbits spend near the parabolic fixed point.
For $\alpha \in (0,1)$, the amount of time spent near the point $x=0$ by Lebesgue-typical points is not long enough to force the relevant invariant measure (the equilibrium state for $-\log|Df|$) to be infinite. But if  $\alpha \ge 1$ then the map has a sigma-finite (but infinite) invariant measure absolutely continuous with respect to the Lebesgue measure.

As in Lemma \ref{lem:Hof trans}, our definition of recurrence gives that for all $\alpha>0$ and $t>1$, the system is transient.  Again, as in the argument around \eqref{eq:HoKe bad def}, our definition of recurrence/transience is more applicable here than \eqref{eq:Sar rec}.

\begin{rem}
Note that the Dirac delta measure on $0$ is a  conformal measure for
$-t\log|Df|$ with $t>1$ if we remove all preimages of 0. 
\end{rem}

%

\subsection{Chebyshev} \label{ssec:cheby}
A simple example of a transitive map in the quadratic family which exhibits a phase transition
is the Chebyshev polynomial $f(x):= 4x(1-x)$ defined on $[0,1]$ (see  for example \cite{Dobphase}).  
For the set of \emph{geometric potentials} $\{-t\log|f'|:t\in \R\}$, if $t>-1$ then the equilibrium state for $-t\log|f'|$ is the absolutely continuous (with respect to Lebesgue) invariant probability measure $\mu_1$, which has $\int \log|f'| \ d \mu_1=\log 2$.  If $t<-1$ then the equilibrium state for $-t\log|f'|$ is the Dirac measure $\delta_0$ on the fixed point at 0, which has $\int \log |f'| \ d\delta_0= \log 4$. 
 So, there exists a phase transition at $t_0=-1$ and
\begin{equation*}
p(t)= \begin{cases} -t\log 4&  \text{if } t < -1,\\
 (1-t)\log 2& \text{if } t\ge -1.
\end{cases}
\end{equation*}

For $t<-1$, if our base set $A$ includes the fixed point 0, then as in \eqref{eq:HoKe bad def}, we obtain $Z_n(\phi, A)\ge 1$, which indicates recurrence.  However, this situation should clearly not be thought of as recurrent.  Indeed, the arguments in Section~\ref{sec:no conf}  can be adapted to show that this is transient by our definition.

\subsection{Multimodal maps} \label{ssec:multi}
Up until now, our examples have had `bad' potentials, but the underlying dynamical system has nevertheless had some Markov structure.  In this section we introduce a standard class of maps, many of which have no such structure.  We study this class in more depth in Section~\ref{sec:no conf}.

Let $\mathcal F$ be the collection of $C^2$ multimodal interval maps $f:I \to I$, where $I=[0,1]$, satisfying:

\newcounter{Lcount}
\begin{list}{\alph{Lcount})}
{\usecounter{Lcount} \itemsep 1.0mm \topsep 0.0mm \leftmargin=7mm}
\item the critical set $\crit = \crit(f)$ consists of finitely many critical points $c$ with critical order $1 < \ell_c < \infty$, i.e., there exists a neighbourhood $U_c$ of $c$ and a $C^2$ diffeomorphism $g_c:U_c \to g_c(U_c)$ with $g_c(c) = 0$
     $f(x) = f(c) \pm |g_c(x)|^{\ell_c}$;
\item $f$ has negative Schwarzian derivative, i.e., $1/\sqrt{|Df|}$ is convex;
\item $f$ is topologically transitive on $I$;
\item $f^n(\crit)  \cap f^m(\crit)=\es$ for $m \neq n$.
\end{list}

For $f\in \F$ and $\mu\in \M_f$, let us define,
$$\lambda(\mu):= \int \log |f'| \ d \mu \qquad \text{ and } \qquad \lambda_m:= \inf \{ \lambda(\mu) : \mu \in \mathcal{M}_f \}.$$

It was proved in \cite{ITeq} that there exists $t^+>0$ such that the pressure function of the (discontinuous) potential $\log |f'|$ satisfies,
\begin{equation*}
p(t)=
\begin{cases}
\textrm{strictly convex and }C^1  & \textrm{ if } t \in (-\infty, t^+),\\
At & \textrm{ if } t >t^+.
\end{cases}
\end{equation*}
In the case $\lambda_m=0$, $t^+\le 1$ and $A=0$; while in the case $\lambda_m>0$, $t^+>1$ and $A<0$.

\begin{rem}
The number of equilibrium measures at the phase transition can be large. Indeed, Cortez and Rivera-Letelier \cite{CJ}  proved that given $\mathcal{E}$ a non-empty, compact, metrisable and totally disconnected topological space then there exists a parameter $\gamma \in (0,4]$ such that set of invariant probability measures of $x\mapsto \gamma x(1-x)$, supported on the omega-limit set of the critical point is homeomorphic to $\mathcal{E}$. Examples of quadratic maps having multiple ergodic measures supported on the omega-limit set of the critical point were  first constructed in \cite{Brminim}.
\end{rem}

Again \eqref{eq:Sar rec} will not give us a reasonable way to check recurrence/transience for these maps due to the poor smoothness properties of the potential as well as the lack of Markov structure.  
Clearly, given what happened in the previous examples, we would expect that the phase transition at $t^+$ marked the switch from recurrence to transience, and indeed we show this in Section~\ref{sec:no conf}.  However, as we show in Section~ \ref{sec:ex} (see also \cite[Example 3]{Sarphase}), it can happen that for systems $(X,f, t\phi)$ increasing the parameter $t$ can take us from recurrence through transience and then out to recurrence again.  So we should check the recurrence/transience of the systems in $\F$ outlined above.  This is done Section~\ref{sec:no conf}.

\subsection{Brief summary of recurrence for interval maps}

All the examples of phase transitions presented in this section have the same type of behaviour. That is, the pressure function has one of the following two forms:
\begin{equation}
p_\phi(t)=
\begin{cases}
\textrm{strictly convex and  differentiable} & \textrm{ if } t \in [0, t_0),\\
At & \textrm{ if } t >t_0,
\end{cases} \label{eq:conv pres}
\end{equation}
where $A \in \R$ is a constant. The regularity at the point $t=t_0$ varies depending on the examples. The other possibility is

\begin{equation}
p_\phi(t)=
\begin{cases}
Bt+C  & \textrm{ if } t \in [0, t_0),\\
At & \textrm{ if } t >t_0,
\end{cases} \label{eq:affine pres}
\end{equation}
where $A,B,C \in \R$ are constants.

\begin{rem}
It also possible for the pressure function to have the `reverse form' to the one given in equation \eqref{eq:affine pres}: i.e., there are interval maps and potentials for which the pressure function has the form $p_\phi(t)=At $ in an interval $(-\infty,t_0]$ and $p_\phi(t)=Bt+C $ for $t>t_0$. The same `reverse form' exists  in the case that the pressure function is given as in equation \eqref{eq:conv pres}.
\end{rem}

Essentially what happens is that the dynamics can be divided into an hyperbolic part and a non-hyperbolic part (the latter having zero entropy, for example a parabolic fixed point or the post-critical set).

\begin{rem}
 As in Section~\ref{ssec:non-trans},  the situation  can be completely different if the map is not assumed to be topologically mixing.
\end{rem}

A natural question that arises when considering the above examples is the following:
Must the onset of transience always give pressure functions of the type in \eqref{eq:conv pres} or \eqref{eq:affine pres} (i.e., the onset of transience occurs `at zero entropy' and once a potential is transient for some $t_0$ is either transient for all $t<t_0$ or $t>t_0$)?  This is shown to be false in Section~\ref{sec:ex}.  In the case of shift spaces we point out \cite{Olivier} as well as the non-constructive Example 3 in \cite{Sarphase}.

\section{No conservative conformal measure}
\label{sec:no conf}

In this section we will show that the systems considered in Section~\ref{sec:intervals} are transient past the phase transition. We focus on multimodal maps $f\in \F$ defined in Section~\ref{ssec:multi}.  We will show that for a certain range of values of $t \in \R$ the potential $-t \log|f'|$ has no conservative conformal measure and hence is transient. The results described here also hold for the Manneville-Pomeau map, but since the proof is essentially the same, but simpler, we only discuss the former case. The first result deals with the recurrent case.

\begin{teo}
Suppose that $f\in \F$.  If $t<t^+$ then there is a
weakly expanding conservative $(-t\log|Df|-p(t))$-conformal measure.
\end{teo}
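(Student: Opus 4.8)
The plan is to construct the desired conformal measure by the standard inducing technique, transferring the problem from the multimodal map $f$ (which has no Markov structure and a singular potential) to a countable Markov shift where Sarig's recurrence theory (and Theorem~\ref{thm:Ru}-type results) applies, and then pushing the resulting measure back down to $I$. First I would fix some reference point or interval $Y\subset I$ away from the critical set $\crit$ on which $f$ is uniformly expanding, and consider the first-return (or a suitable induced) map $F:Y\to Y$ together with the induced potential $\Phi_t$ obtained from $-t\log|Df|-p(t)$ via a formula like \eqref{eq:ind pot}. The key structural fact, drawn from the theory of Hofbauer extensions / canonical Markov towers for maps in $\F$, is that $(Y,F)$ can be coded as a topologically mixing countable Markov shift and that $\Phi_t$ is (weakly H\"older, hence) a Walters potential on it, so that Sarig's trichotomy on p.~\pageref{p:sym conds} is available.

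The heart of the argument is then to show that for $t<t^+$ the induced system is \emph{positive recurrent}, i.e.\ that
\begin{equation*}
\sum_{n\ge 1} e^{-nP(\Phi_t)} Z_n(\Phi_t, C_{i_0}) = \infty,
\end{equation*}
and moreover that the inducing is done at the correct pressure so that $P(\Phi_t)=0$ (the induced pressure vanishes exactly because we subtracted $p(t)$, by the Abramov-type relation $P(\Phi_t)=0 \iff P(-t\log|Df|)=p(t)$). The fact that $t<t^+$, i.e.\ we are strictly before the phase transition where $p(t)=At$ becomes affine, is precisely what should force the leading behaviour of the induced partition functions to be recurrent rather than transient: below $t^+$ the pressure $p(t)$ is strictly convex and $C^1$, so the equilibrium measure for $-t\log|Df|$ has positive Lyapunov exponent and charges $Y$, which via the standard dictionary corresponds to summability failing, i.e.\ recurrence. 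I would lean on the analysis of \cite{ITeq} that produced $t^+$ and the form of $p(t)$ to extract exactly this positive-recurrence statement for the induced potential.

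Granting positive recurrence, Sarig's theorem yields a conservative $(\Phi_t-P(\Phi_t))$-conformal measure $\bar m$ on the shift; projecting $\bar m$ to $Y$ and then spreading it over $I$ by the conformality relation (using that $f^n:U\to f^n(U)$ is $1$-to-$1$ gives $m(f^n(U))=\int_U e^{-S_n(-t\log|Df|-p(t))}\,dm$, as in the Remark following the definition of $\phi$-conformal) produces a $(-t\log|Df|-p(t))$-conformal measure $m$ on $I$. Conservativity of $m$ should descend from conservativity of $\bar m$ together with the fact that $m$-a.e.\ point returns to $Y$ infinitely often. Finally, weak expansion is essentially automatic here: points returning to the expanding reference set $Y$ go to $\eps$-large scale (for $\eps$ comparable to $|Y|$) each time the induced map expands them across a uniformly large domain, so $m(LS_\eps)>0$, giving the weakly expanding property required by Definition~\ref{def:trans_new}.

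The main obstacle I anticipate is \emph{not} the symbolic recurrence computation but the passage between the interval and the symbolic model in both directions while keeping the pressure bookkeeping exact: one must verify that the induced potential is genuinely Walters/weakly H\"older despite the singularities of $\log|Df|$ at the critical points (this requires the bounded-distortion estimates that negative Schwarzian derivative and the critical-order hypotheses in the definition of $\F$ are designed to supply), and one must ensure that the induced pressure is finite and equals zero at the correct value, which is where the strict inequality $t<t^+$ must be used rather than merely $t\le t^+$. Controlling the contribution of orbits that spend long times near $\crit$ (analogous to the time spent near the parabolic point in the Manneville-Pomeau case) is the delicate quantitative step, and I would expect it to be the crux of the proof.
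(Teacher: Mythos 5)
Your outline matches the approach the paper itself takes: the paper does not prove this theorem in the text but cites the appendix of \cite{T} (Proposition 7'), describing the strategy in one sentence as obtaining a conformal measure for an inducing scheme and then spreading it around the space in a canonical way, which is precisely your plan of inducing to a countable (full) Markov shift, invoking Sarig-type recurrence for the induced potential at zero induced pressure, and projecting back. Your identification of the delicate points (regularity of the induced potential, the Abramov-type pressure bookkeeping, and where $t<t^+$ enters) is consistent with what the cited reference must supply, so the proposal is essentially the same route.
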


This is proved in the appendix of \cite{T}, where it is referred to as Proposition 7'.  The idea of the proof is to obtain a conformal measure for an inducing scheme, as described below, and then spread this measure around the space in a canonical way.

\begin{prop}
Suppose that $f:I \to I$ belongs to $\F$ and $\lambda_m=0$.  Then for any $t>1$, $(I, f, -t\log|Df|)$ is transient.
\label{prop:mult trans}
\end{prop}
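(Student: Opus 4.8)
The plan is to argue by contradiction, ruling out the existence of any finite weakly expanding conservative conformal measure. First note that since $\lambda_m=0$ we are in the case $t^+\le 1$, $A=0$ recorded in Section~\ref{ssec:multi}; hence for $t>1$ we have $t>t^+$ and $p(t)=0$, so a $(-t\log|Df|-p(t))$-conformal measure is simply a measure $m$ with $L_{-t\log|Df|}^*m=m$, i.e.\ conformal at eigenvalue $1$. Suppose, for contradiction, that such an $m$ exists and is finite, weakly expanding and conservative.

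Next I would pass to the full-branched induced map $F=f^\tau$ on a domain $Y$ provided by the inducing scheme of \cite{T} (the same one used to build conformal measures for $t<t^+$), with induced potential $\Phi:=-t\log|Df^\tau|$ and induced Birkhoff sums $\Phi_n:=\Phi+\Phi\circ F+\cdots+\Phi\circ F^{n-1}$. Conformality of $m$ at eigenvalue $1$ for $f$ restricts to conformality of $m|_Y$ at eigenvalue $1$ for $F$ with potential $\Phi$. Since each branch of $F$ is onto $Y$ and $f\in\F$ has negative Schwarzian derivative, the scheme has bounded distortion (Koebe), so for every $n$-cylinder $C_w$ of the full shift, where $F^n:C_w\to Y$ is a bijection, the Remark after the definition of conformal measure (applied to $F$) gives
\begin{equation*}
m(Y)=m(F^n(C_w))=\int_{C_w}e^{-\Phi_n}\,dm\asymp e^{-\Phi_n(x_w)}\,m(C_w)
\end{equation*}
for any $x_w\in C_w$, with constants independent of $w$ and $n$; equivalently $m(C_w)\asymp e^{\Phi_n(x_w)}m(Y)$.

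The crux is the estimate $P(\Phi)<0$, which is where $t>t^+$ enters. Because every branch of $F$ expands, $\log|Df^\tau|\ge\log\Lambda>0$ on $Y$, so $t\mapsto P(-t\log|Df^\tau|)$ is strictly decreasing; moreover it is positive for $t<t^+$, vanishes at $t=t^+$, and is negative for $t>t^+$ (this is exactly the relation between induced and original pressure underlying the form of $p(t)$ in Section~\ref{ssec:multi}, established in \cite{ITeq,T}). Granting $P(\Phi)<0$ and summing the cylinder estimate over all $n$-cylinders, which partition $Y$, gives, for any $\delta>0$ and all large $n$,
\begin{equation*}
m(Y)=\sum_{|w|=n}m(C_w)\le C\sum_{|w|=n}e^{\Phi_n(x_w)}\,m(Y)\le C'e^{n(P(\Phi)+\delta)}\,m(Y).
\end{equation*}
Choosing $0<\delta<-P(\Phi)$ and letting $n\to\infty$ forces $m(Y)=0$.

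Finally I would invoke the weakly expanding hypothesis: there is $\eps>0$ with $m(LS_\eps)>0$, and with the inducing scheme taken at the scale $\eps$, points that go to $\eps$-large scale infinitely often have infinitely many iterates landing in the base $Y$, so $m(Y)>0$, contradicting $m(Y)=0$. Hence no finite weakly expanding conservative $(-t\log|Df|-p(t))$-conformal measure exists, and $(I,f,-t\log|Df|)$ is transient. I expect the main obstacle to be the key estimate $P(\Phi)<0$: one must carefully pin down the relation between the induced pressure and $t^+$, and in particular the strict negativity for $t>t^+$. A secondary technical point is the passage from the weakly expanding condition to positivity of $m(Y)$, which relies on the inducing domain being adapted to the large scale $\eps$.
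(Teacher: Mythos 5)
Your skeleton is the same as the paper's (its Lemma~\ref{lem:s=P} followed by the proof of Proposition~\ref{prop:mult trans}): use weak expansion and Theorem~\ref{thm:schemes} to land in an inducing scheme, use conformality plus bounded distortion to compute the mass of induced $n$-cylinders, and sum over the cylinders to force $m(Y)=0$. But two steps are genuinely incomplete. First, the identity $m(Y)=\sum_{|w|=n}m(C_w)$ is not automatic: the union of the induced $n$-cylinders is only the set of points on which $F^n$ is defined, and a priori $m$ could charge points of $Y$ that eventually fall out of the scheme, in which case you only conclude that the surviving set is null and get no contradiction. This is exactly where conservativity must enter, and you list it as a hypothesis but never use it. The paper closes this with a wandering-set argument (the sets $A_k$ in the proof of Lemma~\ref{lem:s=P}) showing that conservativity forces $m\left(\left\{x\in X:\tau^k(x)\text{ defined for all }k\right\}\right)=m(X)$; the same point is needed to justify your very first display, since a $\phi$-conformal measure for $f$ is only $\Phi$-conformal for $F$ when $m(\cup_iX_i)=m(X)$ (Remark~\ref{rmk:Abram}).

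Second, and more seriously, your declared crux $P(\Phi)<0$ for all $t>t^+$ is not what the paper proves, and it is the delicate point. The paper never establishes strict negativity of an induced pressure: it extracts $P(\Psi_t)=0$ \emph{from} the assumed conformal measure and then derives the contradiction from the elementary estimate $\sum_{|w|=n}|C_w|^t\le |X|\bigl(\sup_{|w|=n}|C_w|\bigr)^{t-1}\to 0$, which uses only $t>1$ and needs no rate of decay of cylinder lengths (note $\sup_{|w|=n}|C_w|$ need not decay exponentially, so one cannot in general read off $P(\Phi)<0$ from this). This is precisely why the proposition is restricted to $t>1$ under $\lambda_m=0$ (so $t^+\le 1$) and why the authors explicitly say they do not treat $t^+\neq 1$. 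Your justification of $P(\Phi)<0$ (strict monotonicity plus ``vanishes at $t=t^+$'') silently requires both $P(-t^+\log|DF|)\le 0$ — which itself needs the variational-principle-plus-Abramov argument that constitutes the first half of Lemma~\ref{lem:s=P} — and a uniform expansion constant $\Lambda>1$ for the induced map; deferring to \cite{ITeq,T} at exactly this point leaves the main step unproven. The fix is easy given what you already have: since $e^{\Phi_n(x_w)}\asymp (|C_w|/|Y|)^t$ by distortion, replace the appeal to $P(\Phi)<0$ by the length estimate above and your summation closes the argument for $t>1$ exactly as in the paper.
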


This proposition covers the case when $t^+=1$.  We expect this to also hold when $t^+\neq 1$, but we do not prove this.
As in Sections~\ref{ssec:ho-ke} and \ref{ssec:ma-po}, the strategy used to study multimodal maps $f \in \mathcal{F}$,  and indeed to prove Proposition~\ref{prop:mult trans}, considering that they lack Markov structure and  uniform expansivity, is to consider a generalisation of the first return map. These maps are expanding and are Markov (although over a countable alphabet). The idea is to study the `inducing scheme'  through the theory of Countable Markov Shifts and then to translate the results into the original system. 

We say that $\left(X,\{X_i\}_i, F,\tau\right)=(X,F,\tau)$ is an \emph{inducing scheme} for $(I,f)$ if
\begin{list}{$\bullet$}{\itemsep 0.2mm \topsep 0.2mm \itemindent -0mm \leftmargin=5mm}
\item $X$ is an interval containing a finite or countable
collection of disjoint intervals $X_i$ \st $F$ maps each $X_i$
diffeomorphically onto $X$, with bounded distortion (i.e. there
exists $K>0$ so that for all $i$ and $x,y\in X_i$, $1/K\le DF(x)/DF(y) \le K$);
\item $\tau|_{X_i} = \tau_i$ for some $\tau_i \in \N$ and $F|_{X_i} = f^{\tau_i}$.  If $x \notin \cup_iX_i$ then $\tau(x)=\infty$.
\end{list}
The function $\tau:\cup_i X_i \to \N$ is called the {\em inducing time}. It may
happen that $\tau(x)$ is the first return time of $x$ to $X$, but
that is certainly not the general case.   We denote the set of points $x\in I$ for which there exists $k\in \N$ such that $\tau(F^n(f^k(x)))<\infty$ for all $n\in \N$ by $(X,F,\tau)^\infty$.

The space of $F-$invariant measures is related to the space of $f$-invariant measures. Indeed, given an $f$-invariant measure $\mu$, if there is an $F$-invariant measure $\mu_F$ such that for a subset $A\subset I$,
\begin{equation} \label{eq:ind-meas}
\mu(A)= \frac{1}{\int\tau~d\mu_F}\sum_{k=1}^{\infty} \sum_{i=0}^{k-1} \mu_F \left( f^{-k}(A) \cap X_i \right)
\end{equation}
where $\frac{1}{\int\tau~d\mu_F}<\infty$,
we call $\mu_F$ the \emph{lift} of $\mu$ and say that $\mu$ is a \emph{liftable} measure. Conversely, given a measure $\mu_F$ that is $F$-invariant we say that $\mu_F$ \emph{projects} to $\mu$ if \eqref{eq:ind-meas} holds. We say that a $f-$invariant probability measure $\mu$ is \emph{compatible} with the inducing scheme $(X,F, \tau)$ if
\begin{itemize}
\item We have that $\mu(X)>0$ and $\mu(X \setminus (X, F)^{\infty})=0$, and
\item there exists a $F-$invariant measure $\mu_F$ which projects to $\mu$
\end{itemize}

\begin{rem} \label{rmk:Abram}
Let $\mu$ be a liftable measure and  be $\nu$ be its lift. A classical result by Abramov \cite{Abra} (see also \cite{PeSe,zw})  allow us to relate the entropy of both measures. Further results obtained in  \cite{PeSe,zw} allow us to do the same with the integral of a given potential  $\phi:I\to \R$.   Indeed, for the induced potential $\Phi$ we have that
\begin{equation*}
h(\mu)=\frac{h(\nu)}{\int \tau \ d \nu}  \textrm{ and  } \int \phi \ d \mu= \frac{\int \Phi \ d \nu}{\int \tau \ d \nu}.
\end{equation*}
Also a $\phi$-conformal measure $m_\phi$ for $(I,f)$ is also a $\Phi$-conformal measure for $(X,F)$ if $m_\phi(\cup X_i)=m_\phi(X)$.
\end{rem}

For $f\in \F$ we choose the domains $X$ to be $n$-cylinders coming from the so-called \emph{branch partition}: the set $\P_1^f$ consisting of maximal intervals on which $f$ is monotone.  So if two domains $C_1^i, C_1^j \in \P_1^f$ intersect, they do so only at elements of $\crit$. The set of corresponding $n$-cylinders is denoted $\P_n^f:=\vee_{k=1}^nf^{-k}\P_1$.  We let $\P_0^f:=\{I\}$.  For an inducing scheme $(X,F,\tau)$ we use the same notation for the corresponding $n$-cylinders $\P_n^F$.

The following result, proved in \cite{T} (see also \cite{BTeqnat, ITeq})  proves that useful inducing schemes exist for maps $f \in \mathcal{F}$.

\begin{teo}\label{thm:schemes}
Let $f\in \F$.  There exist a countable collection $\{(X^n,F_n,\tau_n)\}_n$ of inducing schemes with $\bd X^n \notin (X^n,F_n,\tau_n)^\infty$ such that any ergodic invariant probability measure $\mu$ with $\lambda(\mu)>0$ is compatible with one of the inducing schemes $(X^n, F_n,\tau_n)$.
Moreover, for each $\eps>0$ there exists $N\in \N$ such that
$LS_\eps\subset \cup_{n=1}^N(X^n, F_n, \tau_n)^\infty$.
\end{teo}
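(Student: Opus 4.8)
The plan is to establish Theorem~\ref{thm:schemes} by reducing everything to the inducing machinery developed in \cite{T}, which already constructs a family of inducing schemes adapted to maps $f \in \F$. The theorem has two distinct assertions, and I would treat them separately. The first is a \emph{covering} statement: every ergodic $\mu \in \M_f$ with $\lambda(\mu) > 0$ is compatible with one of the countably many schemes. The second is a \emph{large-scale localisation} statement: for each $\eps > 0$, the set $LS_\eps$ of points going to $\eps$-large scale infinitely often is captured, up to the iterate structure, by finitely many of the schemes. The condition $\bd X^n \notin (X^n, F_n, \tau_n)^\infty$ is a technical non-degeneracy requirement ensuring that the boundary of each inducing domain is not itself recurrent under the induced map, which one arranges by choosing the $X^n$ to be $n$-cylinders from the branch partition $\P_n^f$ whose endpoints are (pre)critical or on appropriate orbits.

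For the first assertion, I would invoke the existence of useful inducing schemes from \cite{T} (cited immediately before the statement) together with the liftability theory: an ergodic measure $\mu$ with $\lambda(\mu) > 0$ is nonatomic and \emph{hyperbolic} in the sense that typical orbits expand, so by the standard Hofbauer-tower / first-return construction one can find a cylinder $X$ hit by $\mu$ with positive frequency, on which a return map is uniformly expanding with bounded distortion (using the negative Schwarzian derivative condition b) and the finitely many critical points of finite order from condition a)). The key is that positive Lyapunov exponent forces $\mu(X) > 0$ for some domain $X = X^n$ and forces the lift to exist, so $\mu$ is compatible. I would organise the countable collection so that every such cylinder arising this way appears as some $X^n$; countability follows since there are only countably many $n$-cylinders in $\bigcup_n \P_n^f$.

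For the second assertion, the idea is that going to $\eps$-large scale at time $n$ means some neighbourhood $U \ni x$ maps diffeomorphically onto a ball $B_\eps(f^n(x))$. Once $\eps$ is fixed, the scale at which full branches must appear is bounded below, so only finitely many distinct inducing domains $X^n$ — those of diameter comparable to or exceeding a threshold determined by $\eps$ and the bounded-distortion constant $K$ — are needed to realise these large-scale returns. Concretely, I would show that any $x \in LS_\eps$ eventually lands, under some iterate $f^k$, in the saturated set $(X^n, F_n, \tau_n)^\infty$ for one of finitely many $n \le N$, by matching the $\eps$-large-scale bijection onto a ball with a full branch of one of the finitely many ``large'' schemes. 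The cutoff $N = N(\eps)$ is chosen so that the remaining schemes have all their domains of diameter smaller than a constant multiple of $\eps$, and hence cannot produce the required surjection onto $B_\eps$.

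The main obstacle I expect is the second assertion, specifically the uniform control matching the analytic notion of $\eps$-large scale (a diffeomorphism onto a genuine metric ball) with the combinatorial inducing domains (cylinders with full returns onto $X^n$). One must pass from ``there is a neighbourhood mapping onto $B_\eps(f^n(x))$'' to ``$f^k(x)$ enters a full branch of a scheme of definite size'', and this requires the Koebe / bounded-distortion estimates for negative-Schwarzian maps to ensure that a large-scale image contains a full cylinder of one of the finitely many admissible schemes, with the number of such schemes bounded solely in terms of $\eps$. Handling the finitely many critical points — near which distortion can blow up — within this estimate is the delicate point, and is presumably where the detailed work of \cite{T} is genuinely used.
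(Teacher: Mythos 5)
The paper offers no proof of Theorem~\ref{thm:schemes}: it is quoted as ``proved in \cite{T} (see also \cite{BTeqnat, ITeq})'', where it appears as a consequence of the Hofbauer-extension (canonical Markov extension) machinery. So there is no internal argument to compare yours against; what can be assessed is whether your sketch would actually close the two assertions, and there I see a genuine gap in each.

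For the first assertion, the crucial input is not that $\mu$ charges some cylinder with positive frequency, but that a measure with $\lambda(\mu)>0$ \emph{lifts} to the Hofbauer tower (Keller's liftability theorem); the inducing schemes of \cite{T} are first return maps to sets \emph{in the tower}, projected down to $I$, which is precisely why $\tau$ is generally not a first return time on $I$ (as the paper itself stresses). A first-return map to a cylinder of $\P_n^f$ in the interval itself need not have full branches onto $X$, need not be uniformly expanding, and need not admit the bounded distortion you invoke, because orbits can return close to $\crit$ with unbounded derivative loss. Your phrase ``Hofbauer-tower / first-return construction'' points at the right object, but the step ``positive Lyapunov exponent forces $\mu(X)>0$ for some domain and forces the lift to exist'' is exactly the content of Keller's theorem plus ergodicity on the tower, and cannot be replaced by a positive-frequency pigeonhole on interval cylinders.

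For the second assertion the gap is sharper. Membership in $(X^m,F_m,\tau_m)^\infty$ requires that \emph{every} iterate of the induced map be defined along the orbit, i.e.\ $\tau(F_m^k(f^j(x)))<\infty$ for all $k$; it is not implied by $x$ entering the domain $X^m$ infinitely often under $f$, precisely because $\tau$ is not a first return time on $I$. Your diameter-cutoff argument (finitely many ``large'' schemes can realise an $\eps$-scale surjection, then pigeonhole over the infinitely many large-scale times) at best shows that some fixed domain is visited infinitely often; it does not show that those visits chain together into an infinite $F_m$-orbit. In the tower construction this chaining is automatic, because going to $\eps$-large scale forces the lifted orbit into a compact part of the tower containing only finitely many base sets, and the induced maps are genuine first return maps \emph{there}, so infinite recurrence to a base set in the tower is literally membership in the $\infty$-set. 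Without that (or an equivalent mechanism) the ``Moreover'' clause does not follow from your sketch.
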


We are now ready to apply this theory to the question of transience, building up to proving Proposition~\ref{prop:mult trans}.

\begin{lema}
Suppose that $f\in\F$.  If, for $t >t^+$, there is a conservative weakly expanding $(-t\log|Df|-s)$-conformal measure $m_{t,s}$ for some $s\in \R$, then $s\le P(-t\log|Df|)$.  Moreover, there is an inducing scheme  $(X,F,\tau)$ such that $$P(-t\log|DF|-\tau s)= 0$$ and
$$m_{t,s}\left(\left\{x\in X:\tau^k(x) \text{ is defined for all } k\ge 0\right\}\right)=m_{t,s}(X).$$
\label{lem:s=P}
\end{lema}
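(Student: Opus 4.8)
The plan is to transfer the problem to the induced countable Markov system produced by Theorem~\ref{thm:schemes}, where Sarig's trichotomy applies. Writing $\phi:=-t\log|Df|-s$, the induced potential for any inducing scheme $(X,F,\tau)$ is $\Phi=S_\tau\phi=-t\log|DF|-\tau s$, exactly the potential in the statement. The strategy has five steps: (i) use weak expansion to select one scheme whose infinitely-inducible set carries positive $m_{t,s}$-mass; (ii) use conservativity to upgrade this to \emph{full} mass; (iii) invoke Remark~\ref{rmk:Abram} to view $m_{t,s}$ restricted to this set as a $\Phi$-conformal measure for the induced map; (iv) apply Sarig's theory to conclude $P(\Phi)=0$; and (v) combine the variational principle with Abramov's formula to deduce $s\le p(t)$.

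First I would fix $\eps>0$ with $m_{t,s}(LS_\eps)>0$, which exists by weak expansion. Theorem~\ref{thm:schemes} provides finitely many inducing schemes $(X^1,F_1,\tau_1),\dots,(X^N,F_N,\tau_N)$ with $LS_\eps\subset\bigcup_{n=1}^N (X^n,F_n,\tau_n)^\infty$, so at least one scheme, say $(X,F,\tau)$, has $m_{t,s}\big((X,F,\tau)^\infty\big)>0$. Since $(X,F,\tau)^\infty=\bigcup_{k\ge0}f^{-k}(X^\infty)$, where $X^\infty:=\{x\in X:\tau^k(x)\text{ is defined for all }k\ge0\}$, and since $m_{t,s}$ is $f$-non-singular (conservative measures are non-singular by Definition~\ref{def:con}), it follows that $m_{t,s}(X^\infty)>0$; in particular $m_{t,s}(X)>0$.

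The main obstacle is step (ii): showing $m_{t,s}(X\setminus X^\infty)=0$. I would first reduce this to proving $m_{t,s}(Y_0)=0$ for $Y_0:=X\setminus\bigcup_i X_i$, the points of $X$ not inducible even once. Indeed, decomposing $X\setminus X^\infty=\bigcup_{k\ge0}A_k$, where $A_k$ consists of the points inducible exactly $k$ times, one has $F^k(A_k)\subseteq Y_0$, and since each branch of $F^k$ is a bounded-distortion diffeomorphism (hence $m_{t,s}$-non-singular), $m_{t,s}(A_k)>0$ would force $m_{t,s}(Y_0)>0$. To exclude this I would use conservativity: by the Poincar\'e Recurrence Theorem $m_{t,s}$-a.e.\ point of $Y_0$ returns to $X$ infinitely often, and I would combine this recurrence with the covering property $LS_\eps\subset\bigcup_n (X^n,F_n,\tau_n)^\infty$ to show that a.e.\ such point must eventually enter, and then remain inside, the infinitely-inducible set of \emph{some} scheme, contradicting membership in $Y_0$ once the bookkeeping between the finitely many schemes is handled. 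This interplay between recurrence and the finite family of schemes is the delicate point, and is precisely where both conservativity and weak expansion are genuinely used.

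With step (ii) in hand, Remark~\ref{rmk:Abram} shows that $m_{t,s}$, restricted to $X^\infty$ and normalised, is a $\Phi$-conformal measure for the induced map $F$, which is a full-branched uniformly expanding Markov map with bounded distortion; thus $\Phi$ has summable variations and $F$ is modelled by a topologically mixing countable Markov shift. Conservativity of $m_{t,s}$ for $f$ passes to conservativity of this measure for $F$, so we obtain a conservative $\Phi$-conformal measure with eigenvalue $\lambda=1$; by Sarig's generalised Ruelle--Perron--Frobenius theory this forces $\Phi$ to be recurrent with $\lambda=e^{P(\Phi)}$, whence $P(-t\log|DF|-\tau s)=P(\Phi)=0$. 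Finally, for $s\le p(t)$ I would feed $P(\Phi)=0$ into the variational principle for the Gurevich pressure: choosing $F$-invariant probability measures $\nu$ with $h(\nu)+\int\Phi\,d\nu\to 0$ and $\int\tau\,d\nu<\infty$, the projected measures $\mu$ satisfy, by Abramov's formula (Remark~\ref{rmk:Abram}),
$$h(\mu)-t\int\log|Df|\,d\mu=\frac{h(\nu)-t\int\log|DF|\,d\nu}{\int\tau\,d\nu}\longrightarrow s,$$
so that $p(t)=P(-t\log|Df|)\ge s$, as required.
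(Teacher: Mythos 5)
Your overall architecture matches the paper's: pick an inducing scheme via weak expansion and Theorem~\ref{thm:schemes}, upgrade positive $m_{t,s}$-mass on the infinitely-inducible set to full mass using conservativity, show the induced pressure vanishes, and recover $s\le p(t)$ from the variational principle plus Abramov (your step (v) is essentially the paper's argument verbatim). The genuine gap is in your step (ii), which is exactly where conservativity has to do its work. You reduce to showing $m_{t,s}(Y_0)=0$ for $Y_0=X\setminus\bigcup_iX_i$ and propose to get a contradiction from Poincar\'e recurrence combined with the covering $LS_\eps\subset\bigcup_{n\le N}(X^n,F_n,\tau_n)^\infty$. This does not close: (a) Poincar\'e recurrence only yields returns to $X$, and $\tau$ is explicitly \emph{not} the first return time, so a point of $Y_0$ can return to $X$ infinitely often without ever entering $\bigcup_iX_i$; (b) the covering property concerns $LS_\eps$, and weak expansion gives only $m_{t,s}(LS_\eps)>0$ somewhere in $I$ --- it says nothing about $m_{t,s}$-a.e.\ point of $Y_0$, which need not go to large scale at all; (c) $Y_0$ itself is not a wandering set in general, so your reduction discards precisely the structure that makes conservativity applicable. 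The paper's mechanism is different and direct: if $m_{t,s}(X\sm(X,F)^\infty)>0$ then some
$$A_k=\left\{x\in X:\exists\, i(x)\in\N \text{ such that } \tau^{i(x)}(x)=k \text{ but } \tau\bigl(F^{i(x)}(x)\bigr)=\infty\right\}$$
has positive measure, and each $A_k$ is shown to be a \emph{wandering} set (a point of $A_k$ cannot have a forward iterate in $A_k$), so Definition~\ref{def:con} kills it outright. You should replace the recurrence/bookkeeping argument by this wandering-set argument.

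A secondary difference: for $P(\Phi)=0$ you invoke Sarig's generalised Ruelle--Perron--Frobenius theory (``a conservative conformal measure with eigenvalue $1$ forces $e^{P(\Phi)}=1$''). That route is plausible but heavier, and it leaves unverified hypotheses --- finiteness of the Gurevich pressure of $\Phi$, finiteness and positivity of the induced measure on cylinders, and the passage of conservativity from $f$ to $F$, which itself requires another wandering-set argument. The paper avoids all of this with an elementary computation: bounded distortion and conformality give $|\c_{n,i}|^te^{s\tau^n}=K^{\pm t}|X|^t\,m_{t,s}(\c_{n,i})$ for every $n$-cylinder, and since the $n$-cylinders carry full mass in $X$ for every $n$ (this is where step (ii) is used), the partition sums $\sum_{\c_{n,i}\in\P_n^F}|\c_{n,i}|^te^{s\tau^n}$ are bounded above and below uniformly in $n$, so the Gurevich pressure of $-t\log|DF|-s\tau$ is $0$. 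This computation is worth adopting also because the identity \eqref{eq:cyl meas} it produces is reused in the proof of Proposition~\ref{prop:mult trans}.
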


\begin{proof}
We prove the second part of the lemma first.

Suppose that $m_{t,s}$ is a weakly expanding $(-t\log|Df|-s)$-conformal measure.  We introduce an inducing scheme $(X,F)$.
Since $m_{t,s}$ is weakly expanding, by Theorem~\ref{thm:schemes} there exists an inducing scheme $(X,F, \tau)$ such that \begin{equation}
m_{t,s}\left(\left\{x\in X:\tau^k(x) \text{ is defined for all } k\ge 0\right\}\right)=m_{t,s}(X)>0.
\label{eq:ind time meas}
\end{equation}

This can be seen as follows.   Theorem~\ref{thm:schemes} implies that there exists $(X,F)$ with $m_{t,s}((X,F)^\infty)>0$.  If $m_{t,s}(X\sm(X,F)^\infty)>0$ then there must exist $k\in \N$ such that the set 
$$A_k:=\left\{x\in X:\exists \ i(x)\in \N \text{ such that } \tau^{i(x)}(x)=k \text{ but } \tau(F^{i(x)})=\infty\right\}$$
has $m_{t,s}(A_k)>0$.  But a standard argument shows that $A_k$ is a wandering set: if not then there exist $n>j\in \N_0$ such that there is a point $x\in f^{-n}(A_k)\cap f^{-j}(A_k)$: so $z:=f^j(x)\in A_k$ and $f^{n-j}(z)\in A_k$, which is impossible.

By the distortion control for the inducing scheme, for any $n$-cylinder  $\c_{n,i}\in \P_n^F$  and since $m_{t,s}(X)=\int_{\c_{n,i}}|DF^n|^te^{s\tau^n}~dm_t$, there exists $K\ge 1$ such that
\begin{equation}
|\c_{n,i}|^te^{s\tau^n} =K^{\pm t}|X|^t m_{t,s}(\c_{n,i}).
\label{eq:cyl meas}
\end{equation}

Since the inducing scheme is the full shift, and because of this distortion property, the pressure of $-t\log|DF|-s\tau$ can be computed as
$$\lim_{n\to \infty}\frac{\log\left(\sum_{\c_{n,i}\in \P_n^F}|\c_{n,i}|^te^{s\tau^n}\right)}n.$$
However, using first \eqref{eq:cyl meas} and then \eqref{eq:ind time meas}, we have
$$\sum_{\c_{n,i}\in \P_n^F}|\c_{n,i}|^te^{s\tau^n}= K^{\pm t}|X|^t \sum_{\c_{n,i}\in \P_n^F} m_{t,s}(\c_{n,i})=K^{\pm t}|X|^t  m_{t,s}(X)$$
for all $n\ge 1$.
This implies that $P(-t\log|DF|-s\tau)=0$, proving the second part of the lemma.

We prove the first part of the lemma by applying the Variational Principle to the inducing scheme.  Since $P(-t\log|DF|-s\tau)=0$, by \cite[Theorem 2]{Sartherm}, there exists a sequence $(\mu_{F,n})_n$ each supported on a finite number of cylinders in $\P_1^F$ and with
$$\lim_{n \to \infty} \left( h(\mu_{F,n})+\int -t\log|DF|-s \int \tau~d\mu_{F,n} \right) = 0.$$
Therefore, by the Abramov Theorem (see Remark~\ref{rmk:Abram}), for the projected measures $\mu_n$ we have
$$h(\mu_{n})-\int t\log|Df|~d\mu_{n}\to s.$$
Hence the definition of pressure implies that $s\le p(t)$.
\end{proof}

\begin{proof}[Proof of Proposition~\ref{prop:mult trans}.]
Suppose that there exists a weakly expanding conservative $-t\log|Df|$-conformal measure $m_t$.  Let $(X,F)$ be the inducing scheme in Lemma~\ref{lem:s=P}, with distortion $K\ge 1$.  Then $P(\Psi_t)=0$ and 
\begin{align*}
m_t(X) & =\sum_{\c_{n,i}\in \P_n^F} m_t(\c_{n,i}) = K^{\pm t} |X|^t\sum_{\c_{n,i}\in \P_n^F}|\c_{n,i}|^t\\
 &= K^{\pm t} |X|^t \sum_{\c_{n,i}\in \P_n^F}|\c_{n,i}|^{}|\c_{n,i}|^{t-1}\\
 &\le K^{t}|X|^t \left(\sup_{\c_{n,i}\in \P_n^F}|\c_{n,i}|\right)^{t-1}\sum_{\c_{n,i}\in \P_n^F}|\c_{n,i}|\\
 &\le K^{t}|X|^{t+1} \left(\sup_{\c_{n,i}\in \P_n^F}|\c_{n,i}|\right)^{t-1}.
 \end{align*}
Since $t>1$ by choosing $n$ large, we can make this arbitrarily small, so we are led to a contradiction.
\end{proof}

\section{Possible transient behaviours} \label{sec:ex}

In this section we address some of the questions  raised about the possible behaviours of transient systems in Section~\ref{sec:trans}.  In particular,
we present an example which gives us a range of possible behaviours for a pressure function which has one or two phase transitions.  This example is very similar to that presented by Olivier in
\cite[Section 4]{Olivier} in which he extended the ideas of Hofbauer \cite{Hnonuni} to produce a system with hyperbolic dynamics, but with a potential $\phi$ which was sufficiently irregular to produce a phase transition: the support of the relevant equilibrium states $t\phi$ jumping from the whole space to an invariant subset as $t$ moved through the phase transition.  We follow the same kind of argument, with slightly simpler potentials.   In our case, we are able to obtain very precise information on the pressure function and on the measures at the phase transition.  Moreover, we can arrange our system so that the support of the relevant equilibrium state for $t\phi$ jumps from the whole space, to an invariant subset, and then back out to the whole space as $t$ increases from $-\infty$ to $\infty$.  Between the phase transitions we have transience.
We point out that Sarig proved the existence of such phenomena in \cite{Sarphase}, but here we are able to give an explicit, and fairly elementary, construction.

\begin{defi}
For a dynamical system $(X,f)$ with a potential $\phi$, let us consider conditions i) $\lim_{t\to -\infty}p_\phi(t)= \infty$; ii) there exist $t_1<t_2$ such that $p_\phi(t)$ is constant on $[t_1, t_2]$; iii) $\lim_{t\to \infty}p_\phi(t) =\infty$.  We say that $p_\phi$ is \emph{DF} (for down-flat) if i) and ii) hold; that $p_\phi$ is \emph{DU} (for down-up) if i) and iii) hold; that $p_\phi$ is \emph{DFU} (for down-flat-up) if i), ii) and iii) hold.
\end{defi}

In this section we describe a situation with pressure which is DFU.
The system is the full-shift on three symbols $(\Sigma_3, \sigma)$.  (Note that we could instead consider $\{ I_i \}_{i=1}^{3}$, three pairwise disjoint intervals contained in $[0,1]$, and the map
$f:\bigcup_{i=1}^{3} I_i  \subset [0,1]\to [0,1]$, where $f(I_i)=[0,1]$ which is topologically (semi-)conjugated to $(\Sigma_3, \sigma)$.)
The construction we will use can be thought of as a generalisation of the renewal shift  (see Section~\ref{ssec:nonc}). Let $(\Sigma_3, \sigma)$ be the full shift on three symbols $\{1,2,3\}$.  A point $x \in \Sigma_3$ can be written as $x=(x_0x_1 x_2 \dots)$, where $x_i \in \{1,2,3\}.$ Our \emph{bad set} (the we will denote by $B$) will be the full shift on two symbols $\{1,3\}$ and the \emph{renewal vertex} will be $\{2\}$.

For $N\ge 1$ and $(x_0,x_1,\ldots, x_{N-1})\in \{1,2,3\}^N$, let $[x_0x_1\ldots x_{N-1}]$ denote the cylinder $C_{x_0x_1\ldots x_{N-1}}$.
We set $X_0$ to be the cylinder $[2]$ and define the \emph{first return time} on $X_0$ as the function $\tau:[2] \to \N$ defined by $\tau(x)= \inf \{n \in \N: \sigma^n x \in [2]\}$. 
We set $$X_n:=\left\{x\in [2]:\tau(x)\right\}.$$
This consists of $2^{n-1}$ cylinders. Indeed, we list the first three sets from which this assertion is already clear,
\[X_1= [22]\]
\[X_2= [212]\cup [232]\]
\[X_3= [2112]\cup [2132]\cup [2312]\cup [2332]. \]

The class of potentials is given as follows.

\begin{defi}
A function $\phi : \Sigma_3 \to \mathbb{R}$ is called a \emph{grid function} if it is of the form
\[ \phi(x) = \sum_{n=0}^{\infty} a_n \cdot \mathbb{1}_{X_n}(x), \]
where $\mathbb{1}_{X_n}(x)$ is the characteristic function of the set $X_n$ and $(a_n)_{n \in \mathbb{N}}$ is a sequence of real numbers such that $\lim_{n \to \infty} a_n =0$. Note that $\phi|_B=0$.
\end{defi}
\textbf{MT:added a bit.} Grid functions were introduced in a more general form by Markley and Paul \cite{MarkPaul}: they allowed the set $B$ to be any subshift and $X_n$ to be any partition elements converging to $B$ in the Hausdorff metric.  They were presented as a generalisation of those functions by Hofbauer \cite{Hnonuni} which we described in Section~\ref{ssec:ho-ke}. They can be thought of as weighted distance functions to a \emph{bad} set $B$.  Recently, this type of potentials were used to disprove an ergodic optimisation conjecture  \cite{chh}.

We are now ready to state our result concerning the thermodynamic formalism for our grid functions.

\begin{teo}
Let $(\Sigma_3, \sigma)$ be the full-shift on three symbols and let $\phi:\Sigma_3 \mapsto \R$ be a grid function defined by a sequence $(a_n)_n$. Then
\begin{enumerate}
\item there exist $(a_n)_n$ so that $D^-p_\phi(1)<0$, but $p_\phi(t)=\log 2$ for all $t\ge 1$;
\item there exist $(a_n)_n$  and $t_1>1$ so that $D^-p_\phi(1)<0$, $p_\phi(t)=\log 2$ for all $t\in [1, t_1]$ and $Dp_\phi(t)>0$ for all $t>t_1$;
\item there exist $(a_n)_n$ so that $Dp_\phi(t)<0$ for $t<1$, but $p_\phi$ is $C^1$ at $t=1$ and $p_\phi(t)=\log 2$ for all $t\ge 1$;
\item there exist $(a_n)_n$  and $t_1>1$ so that $Dp_\phi(t)<0$ for $t<1$, but $p_\phi$ is $C^1$ at $t=1$, and $p_\phi(t)=\log 2$ for all $t\in [1, t_1]$ and $Dp_\phi(t)>0$ for all $t>t_1$;
\end{enumerate}
\label{thm:main grid}
\end{teo}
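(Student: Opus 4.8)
The plan is to study the thermodynamic formalism of the grid function $\phi$ by passing to the induced system on the renewal vertex $X_0 = [2]$, which is the standard technique for renewal-type shifts (as in Section~\ref{ssec:nonc}). The key computation is to evaluate the induced pressure. Since $\phi|_B = 0$ and $\phi$ takes the constant value $a_{n}$ on the $2^{n-1}$ cylinders making up $X_n$, the induced potential for $t\phi$ over the return time $\tau$ is constant on each $X_n$, taking value $t s_n$ where I set $s_n := \sum_{k=0}^{n-1} a_k$ (mirroring the Hofbauer computation of Section~\ref{ssec:ho-ke}). The crucial observation is that $X_n$ consists of $2^{n-1}$ cylinders each of return time $n$, so the relevant inducing (or Gurevich-type) series controlling the pressure $p_\phi(t)$ is governed by the function
\begin{equation*}
Z(t,\beta) := \sum_{n \ge 1} 2^{n-1} e^{t s_n} e^{-\beta n},
\end{equation*}
and $p_\phi(t)$ is determined as the value $\beta = p_\phi(t)$ at which $Z(t,\beta)$ transitions from $> 1$ (or $=\infty$) to $< 1$. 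First I would establish this characterisation rigorously, using the fact that the induced system is a full shift over a countable alphabet and invoking Sarig's variational principle together with the Abramov relation (Remark~\ref{rmk:Abram}).

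With this in hand, the proof splits into analysing the function $t \mapsto p_\phi(t)$ for each of the four prescribed behaviours. The key dichotomy is between the hyperbolic regime, where $p_\phi(t) > \log 2$ and the equilibrium state charges the whole space, and the flat regime $p_\phi(t) = \log 2$, where the bad set $B$ (the full shift on $\{1,3\}$, which has topological entropy exactly $\log 2$) dominates and carries the equilibrium state. The value $\log 2$ is forced as a lower bound for all $t$ because $\phi|_B = 0$, so the Bernoulli measure on $B$ always gives pressure at least $\log 2$. The phase transition at $t = 1$ occurs precisely when $Z(1,\log 2) = \sum_n 2^{n-1} e^{s_n} 2^{-n} = \tfrac12\sum_n e^{s_n}$ crosses the critical threshold. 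Thus each item reduces to choosing $(a_n)_n$ so that the partial sums $s_n$ and the associated series $\sum_n e^{s_n}$ and $\sum_n n\, e^{s_n}$ behave appropriately. For item (1), I would choose $(a_n)_n$ (asymptotically like $\alpha \log\frac{n+1}{n+2}$, as in Section~\ref{ssec:ho-ke}) so that $\sum_n e^{s_n} = \infty$ but in a borderline way giving a genuine left derivative $D^-p_\phi(1) < 0$ while pinning $p_\phi(t) = \log 2$ for $t \ge 1$; the $C^1$ versions (3) and (4) require a more delicate tail so that $D^-p_\phi(1) = 0$, controlled by whether $\sum_n n\, e^{s_n}$ converges (compare the positive/null recurrent dichotomy in Figure~\ref{table:hof}).

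The genuinely new feature, and the main obstacle, is producing the \emph{up} part in items (2) and (4): arranging $t_1 > 1$ with $p_\phi(t) = \log 2$ on $[1,t_1]$ and then $Dp_\phi(t) > 0$ for $t > t_1$. This requires the potential to be eventually \emph{positive} in a summable sense so that for large $t$ the weight $e^{t s_n}$ grows fast enough along some subsequence to push the pressure strictly above $\log 2$ again, re-exciting the hyperbolic part of the system. Concretely I would take the sequence $(s_n)_n$ to decrease to a finite negative value on an initial range (producing the down-then-flat behaviour) but then design a sparse subsequence where $s_n$ grows, so that $\sup_n \tfrac{t s_n}{n}$ eventually exceeds $-\log\tfrac12$ and the series $Z(t,\log 2)$ diverges for $t > t_1$. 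The delicate point is to keep $\lim_n a_n = 0$ (so that $\phi$ remains a valid grid function and hence continuous) while still allowing $s_n = \sum_{k<n} a_k$ to have the required non-monotone, eventually-growing profile; this forces $a_n$ to change sign and decay slowly, and I would verify that the two transition points $t=1$ and $t=t_1$ are as claimed by computing one-sided derivatives of $\beta \mapsto Z(t,\beta)$ via the implicit relation $Z(t, p_\phi(t)) = 1$ in the divergent regime. Finally, I would identify the equilibrium states at and between the transitions — the Bernoulli measure on $B$ on the flat stretch, and lifted Gibbs measures from the induced full shift off it — to confirm that the support jumps from the whole space to $B$ and back out, realising the DFU picture.
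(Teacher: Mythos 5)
Your setup coincides with the paper's: induce on the renewal cylinder $[2]$, observe that the induced potential is locally constant with $\Phi|_{X_n}=s_n$ over $2^{n-1}$ branches of return time $n$, read off the pressure from the series $Z(t,\beta)=\sum_n 2^{n-1}e^{ts_n-\beta n}$ via $P(\Psi_t)=\log\bigl(\tfrac12\sum_n e^{ts_n-n(\beta-\log2)}\bigr)$ and the Abramov/variational principle, normalise by $\tfrac12\sum_n e^{s_n}=1$ so the transition sits at $t=1$, and control differentiability at the transition through the convergence or divergence of $\sum_n n e^{s_n}$ with a tail $s_n\sim\kappa-\gamma\log n$. (Minor slip: for item (1) you write ``$\sum_n e^{s_n}=\infty$''; the normalisation forces $\sum_n e^{s_n}=2$, and the first-order transition $D^-p_\phi(1)<0$ corresponds to $\sum_n n e^{s_n}<\infty$, not to divergence of $\sum_n e^{s_n}$.)

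The genuine gap is in your mechanism for the ``up'' part of (2) and (4). You propose to make the \emph{tail} of $(s_n)_n$ eventually grow along a sparse subsequence so that $Z(t,\log 2)$ diverges for $t>t_1$. This cannot work. The normalisation $\sum_n e^{s_n}=2<\infty$ forces $s_n\to-\infty$, so all but finitely many $s_n$ are negative; for those terms $e^{ts_n}\le e^{s_n}$ whenever $t\ge 1$, so the tail of $\tfrac12\sum_n e^{ts_n}$ is \emph{decreasing} in $t$ and the whole series stays finite for every $t\ge1$. Equally, your criterion ``$\sup_n ts_n/n$ eventually exceeds $\log 2$'' is vacuous for a grid function, since $a_n\to0$ gives $s_n=o(n)$. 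Any eventually-growing profile of $s_n$ strong enough to create divergence for $t>t_1$ would already create divergence at $t=1$ and destroy the normalisation. The only way to push $p_\phi(t)$ above $\log 2$ for large $t$ is to have $s_n>0$ for at least one $n$, so that a \emph{single} term $\tfrac12 e^{ts_n}$ of the (always finite) sum exceeds $1$ for large $t$; dynamically this is an invariant measure on finitely many return branches with $\int\phi\,d\mu>0$. The paper does exactly this with a finite perturbation: replace $a_0$ by $\delta\in(0,\log2)$ (so $s_1=\delta>0$ and $p_\phi(t)\ge t\delta\to\infty$ via the Dirac measure at the fixed point $\overline{2}$) and compensate by $a_1\mapsto a_1+\delta'$ with $\delta'<0$ restoring $\tfrac12\sum_n e^{s_n}=1$; a Taylor estimate gives $2\delta+\delta'<0$, whence $\tfrac12(e^{t\delta}+e^{t(\delta+\delta')}\sum_{n\ge2}e^{ts_n})<1$ for $t>1$ near $1$, producing the flat transient stretch $[1,t_1]$ before the rise. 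Your tail is left untouched, which is also what keeps the smoothness analysis at $t=1$ and $t=t_1$ (Lemmas~\ref{lem:Dp 0} and \ref{lem:c1 or not}) intact. Without replacing your sparse-subsequence construction by a finite perturbation of this kind, items (2) and (4) do not go through.
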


\begin{figure}[h]
\begin{center}
\includegraphics[width=8cm]{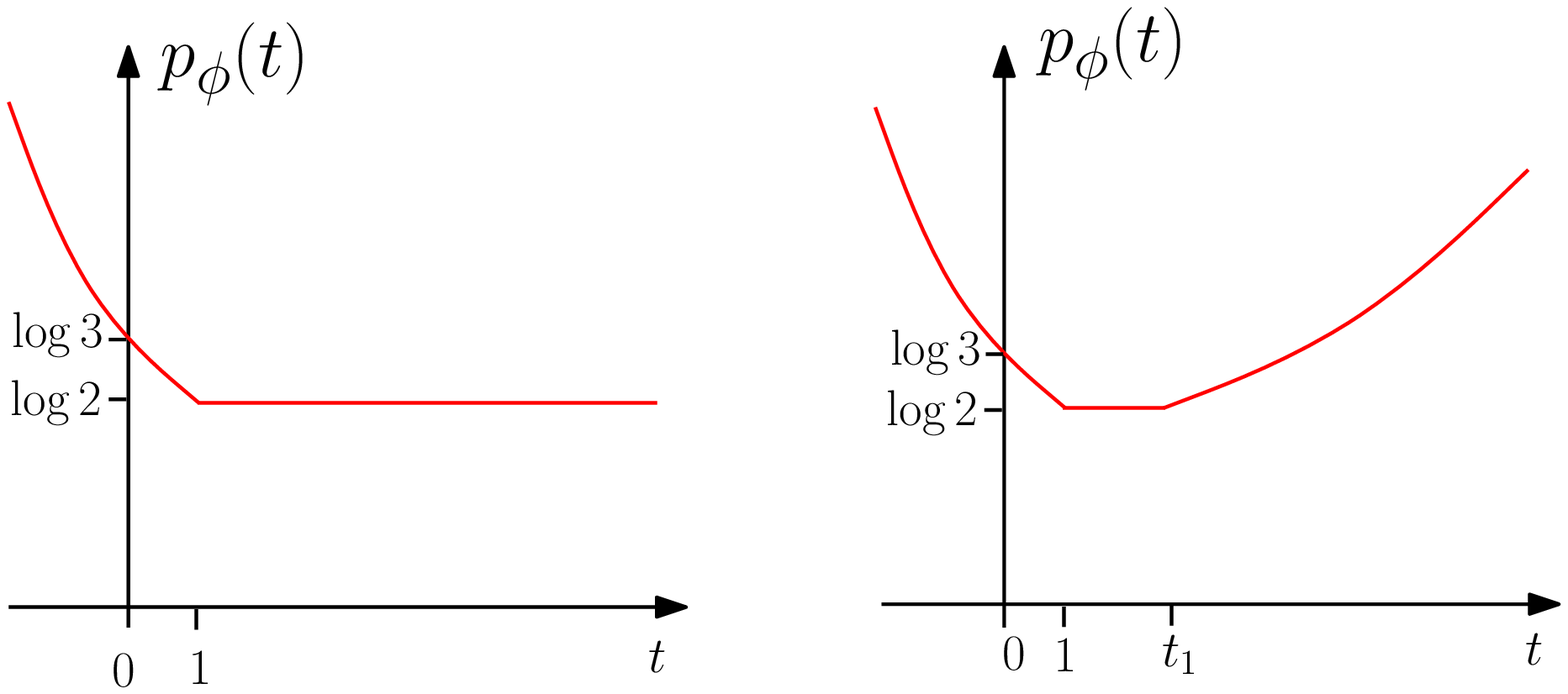}
\end{center}
\caption{\label{fig:DFU_kinks} Sketch of cases (1) and (2) of Theorem~\ref{thm:main grid}. }
\end{figure}

\begin{figure}[h]
\begin{center}
\includegraphics[width=8cm]{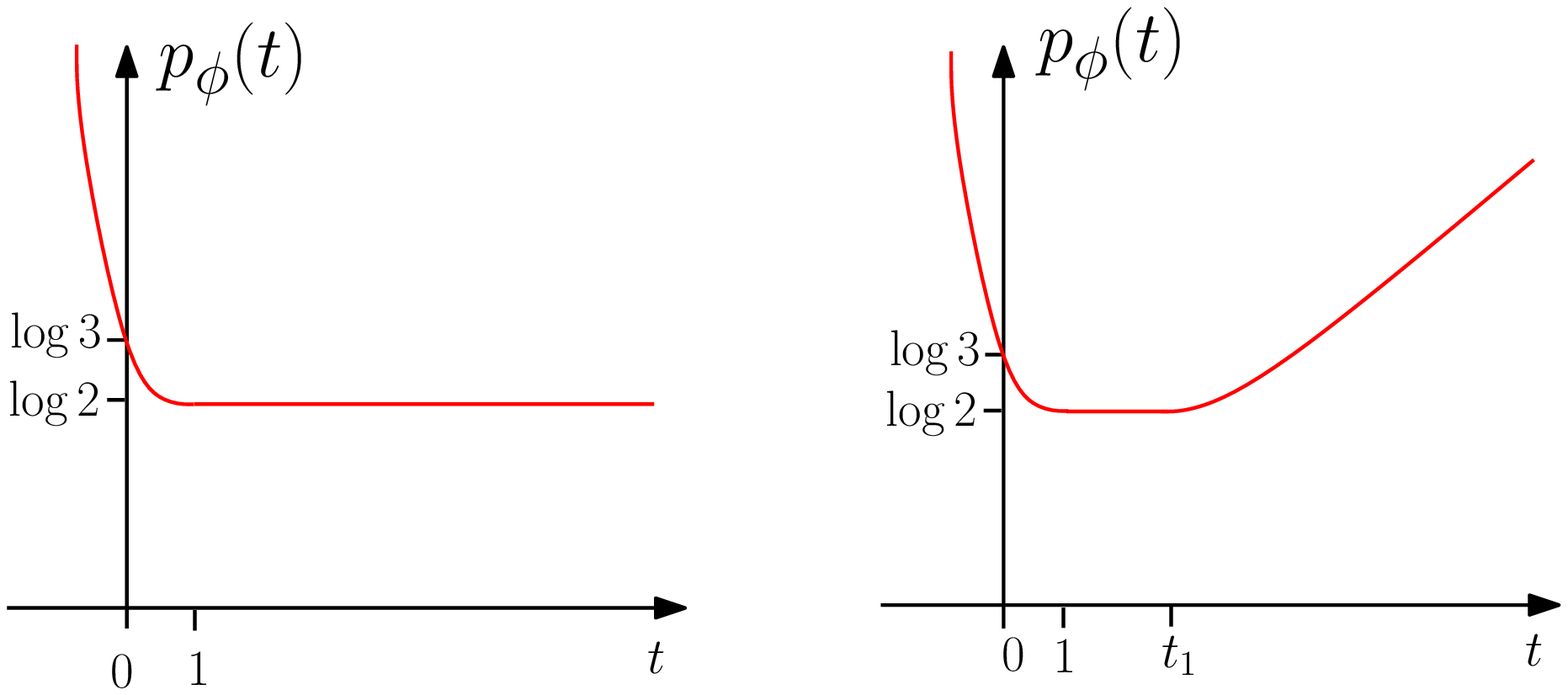}
\end{center}
\caption{\label{fig:DFU_flat} Sketch of cases (3) and (4) of Theorem~\ref{thm:main grid}.}
\end{figure}

We comment further on the systems $(\Sigma_3, \sigma, t\phi)$ with reference to Table~\ref{table:hof} (note that the only aspects which don't follow more or less immediately from the construction of our sequences $(a_n)_n$ are the null recurrent parts, which follow from Lemmas~\ref{lem:Dp 0} and \ref{lem:c1 or not}):

\begin{list}{$\bullet$}
{ \itemsep 1.0mm \topsep 0.0mm \leftmargin=7mm}
\item In case (1) of the theorem, the system is positive recurrent  for $t\le 1$ and transient for $t>1$.  The pressure function $p_\phi$ is DF.  See the left hand side of Figure~\ref{fig:DFU_kinks}.
\item In case (2) of the theorem, the system is positive recurrent  for $t\in (-\infty, 1]\cup [t_1, \infty)$ and transient for $t\in (1, t_1)$.  The pressure function is DFU.  See the right hand side of Figure~\ref{fig:DFU_kinks}.
\item In case (3) of the theorem, the system is positive recurrent for $t< 1$, null recurrent for $t=1$ and transient for $t>1$.    The pressure function is DF.   See the left hand side of Figure~\ref{fig:DFU_flat}.
\item In case (4) of the theorem, the system is positive recurrent  for $t\in (-\infty, 1)\cup (t_1, \infty)$, null recurrent for $t=1, t_1$ and transient for $t\in (1, t_1)$.  The pressure is DFU.  See the right hand side of Figure~\ref{fig:DFU_flat}.
\item  In the cases above where $t\phi$ is transient, as in Lemma~\ref{lem:Hof trans}, there is a dissipative $t\phi$-conformal measure.
\end{list}

The first and third parts of this theorem follow easily from the second and fourth parts, so we omit their proof.  The proof of this theorem occupies the rest of this section.

Note that in the case of Hofbauer's example, described in \ref{ssec:ho-ke}, the modes of recurrence of the potential and behaviour of the pressure function are  determined by the sums of the sequence $(a_n)_n$ (see Table \ref{table:hof}). This is also the case in our example.

\subsection{The inducing scheme}
The \emph{first return map}, denoted by $F$, is defined by
\begin{equation*}
F(x)= \sigma^{n}(x) \textrm { if } x \in X_n.
\end{equation*}
Note that the bad set $B$, on which $F$ is not defined, can be thought of as a coding for the middle third Cantor set. Also note that  the map
 $F$ restricted to the set  $X_n$  is a $2^{n-1}$-to-one map for each $n\ge1$.

The induced potential for this first return map is given by
$\Phi(x)= S_{\tau(x)}\phi(x)$.  For $n\ge 1$ we let
$$s_n:=a_0+\cdots+a_{n-1}.$$
Then by definition of $\phi$, for any $x\in X_n$ we have $\Phi(x)=s_n$.

The definitions of liftability of measures and aspects of inducing schemes for the case considered here are directly analogous to the setting considered in Section~\ref{sec:no conf}, so we do not give them here.

\begin{rem}
Note that the potential $\Phi$ is a locally constant over the countable Markov partition
$\bigcup_{n=1}^{\infty} X_n $. Therefore with the inducing procedure we have gained regularity on our potential. Observe that if there is a $\Phi$-conformal measure $\mu$, it is also a Gibbs measure, and indeed for any $x\in X_n$, $\mu(X_n)=e^{\Phi(x)}=e^{s_n}$.   Since $X_n$ consists of $2^{n-1}$ connected components, each has mass $2^{-(n-1)}e^{s_n}$.
\label{rmk:Phi loc cons}
 \end{rem}

Given a grid function (our potential) $\phi$ defined on $\Sigma_3$, to discuss equilibrium states for the induced system,  as in Section~\ref{sec:no conf}, it is convenient to shift the original potential to ensure that its induced version will have pressure zero.  Therefore, since we will be interested in the family of potentials $t\phi$ with $t \in \R$, we set
$$\psi_t:=t\phi-p_\phi(t) \text{ and } \Psi_t:=t\Phi-\tau p_\phi(t).$$
We denote $\Psi_t|_{X_i}$ by $\Psi_{t, i}$.

In this setting, the properties of the pressure function will depend directly on the choice of the sequence $(a_n)_n$.

Our first restriction on the sequence $(a_n)_n$ comes from a normalisation requirement. Indeed, for $t=1$ we want  $p_\phi(t)= P(\phi)=\log 2$. If this is the case, then the measure of maximal entropy $\mu_B$ on $B$ is an equilibrium measure since $h(\mu_B)+\int\phi~d\mu_B=h(\mu_B)=\log2$.  We choose $(a_n)_n$ so that
\begin{equation}1=\sum_ie^{\Psi_{1,i}}= \sum_{n\ge 1} 2^{n-1} e^{s_n-n\log 2}=\frac12\sum_{n\ge 1} e^{s_n}.\label{eq:t0 is 1}
\end{equation}

As in \cite[p25]{ba} for example, since $\Psi_t$ is locally constant, $e^{P(\Psi_1)}=\sum_ie^{\Psi_{1,i}}$, so \eqref{eq:t0 is 1} implies $P(\Psi_1)=0$.  Similarly,
\begin{equation}
e^{P(\Psi_t)}=\sum_{i \ge 1}e^{\Psi_{t,i}}= \sum_{i \ge 1} e^{t\Phi_{i}-\tau p_\phi(t)}= \sum_{n\ge 1} 2^{n-1} e^{ts_n-np_\phi(t)}.
\label{eq:ind eg}
\end{equation}

We have the following result, similarly to Lemma~\ref{lem:s=P}.

\begin{lema}
The existence of a conservative $\psi_t$-conformal measure $m_t$ implies $P(\Psi_t)=0$.
\label{lem:conf 0 pres}
\end{lema}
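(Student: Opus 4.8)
The plan is to follow the proof of Lemma~\ref{lem:s=P}, which is considerably cleaner in the present setting because the induced system $(X_0,F)$ with $X_0=[2]$ is a full shift on a countable alphabet and the induced potential is locally constant, so no distortion estimates are needed. Throughout let $\{X_i\}_i$ denote the induced cylinders, i.e. the connected components of the sets $X_n$, $n\ge1$; each satisfies $F|_{X_i}=\sigma^{\tau_i}$, maps bijectively onto $X_0=[2]$, and carries the constant value $\Psi_t\equiv\Psi_{t,i}$.

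First I would record that $m_t([2])>0$. Since $\Sigma_3$ is topologically exact and $\psi_t$ is bounded, $m_t$ is finite and we normalise it to a probability measure. As $\sigma$ maps $[2]$ bijectively onto $\Sigma_3$, conformality gives
$$1=m_t(\Sigma_3)=\int_{[2]}e^{-\psi_t}\,dm_t,$$
so $m_t([2])=0$ is impossible, the right-hand side being the integral of a non-negative function over a null set.

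The role of conservativity is to guarantee that $m_t$-almost every point of $[2]$ returns to $[2]$. The set of non-returning points $W:=\{x\in[2]:\tau(x)=\infty\}$ is wandering: if some $x$ lay in $\sigma^{-n}(W)\cap\sigma^{-m}(W)$ with $n<m$, then $\sigma^m x\in[2]$, so $\sigma^n x\in W$ would return to $[2]$ at the positive time $m-n$, contradicting $\sigma^n x\in W$. Hence $m_t(W)=0$ and $m_t\big(\bigcup_i X_i\big)=m_t([2])$.

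The final step is the conformality computation, exactly as in \eqref{eq:cyl meas}. As $S_{\tau_i}\psi_t=\Psi_{t,i}$ is constant on $X_i$ and $\sigma^{\tau_i}:X_i\to[2]$ is injective,
$$m_t([2])=\int_{X_i}e^{-S_{\tau_i}\psi_t}\,dm_t=e^{-\Psi_{t,i}}\,m_t(X_i),$$
so $m_t(X_i)=e^{\Psi_{t,i}}m_t([2])$. Summing over $i$ and using the previous paragraph gives $m_t([2])=\big(\sum_i e^{\Psi_{t,i}}\big)m_t([2])$; dividing by $m_t([2])>0$ yields $\sum_i e^{\Psi_{t,i}}=1$. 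By local constancy of $\Psi_t$ this sum equals $e^{P(\Psi_t)}$ (see \eqref{eq:ind eg}), whence $P(\Psi_t)=0$. The one point to handle with care is conservativity: without it the wandering set $W$ could carry positive mass and one would only obtain $\sum_i e^{\Psi_{t,i}}\le1$, i.e. $P(\Psi_t)\le0$; the wandering-set argument is precisely what upgrades this inequality to the required equality.
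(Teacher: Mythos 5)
Your proof is correct and follows essentially the same route as the paper's: conservativity shows the set of non-returning points of the base $[2]$ is a null wandering set, and the conformality identity on the first-return cylinders then forces $\sum_i e^{\Psi_{t,i}}=1$, i.e.\ $P(\Psi_t)=0$. The paper phrases this as a reduction to the argument of Lemma~\ref{lem:s=P}, while your version is a self-contained instance of that argument (the level-one sum suffices since $\Psi_t$ is locally constant), with the added, and welcome, explicit verification that $m_t([2])>0$.
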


\begin{proof}
To apply the argument of Lemma~\ref{lem:s=P}, we only need to show that $$m_t\left(\left\{x:f^n(x)\in X_1 \text{ for infinitely many } n\right\}\right)>0.$$  If not then for $$A_k:=\left\{x\in X_1:f^{k+n}(x)\in X_0\cup X_2 \text{ for all } n\ge 1\right\},$$
$m_t(\cup_kA_k)>0$.  In particular, we have $m_t(A_0)>0$.  Observe that $A_0$ is a wandering set since $f^{-p}(A_0)\cap f^{-q}(A_0)=\es$ for positive $q\neq p$.  Therefore $m_t$ is not conservative.
\end{proof}

\subsection{Down-flat-up pressure occurs}

We first show that the DF case occurs and then show DFU occurs too. First we set $a_0=0$ and suppose that  for every $n \in \N$  the numbers  $a_n$ are chosen so that $a_n<0$ and \eqref{eq:t0 is 1} holds.

Since $\phi\le 0$; the pressure function $p_\phi$ is decreasing in $t$; $p_\phi(1)=\log2$; and $p_\phi(t)\ge \log 2$; this means that $p_\phi(t)=\log 2$ for all $t\ge 1$, so the DF case occurs.  The  transience of $(\Sigma, \sigma, t\phi)$ for $t>1$ follows as below.  In this case,
 $s_1=0$, and \eqref{eq:t0 is 1} can be rewritten as
\begin{equation}
1=\frac12\sum_{n\ge 1}e^{s_n} = \frac12\left(1+\sum_{n\ge 2}e^{s_n}\right).
\label{eq:DF}
\end{equation}

Now to show that the DFU case occurs, let us first set $(a_n)_n$ as above.
Next we replace $a_0$ by $\tilde a_0:=\delta\in (0,\log2)$, and $a_1$ by $\tilde a_1:=a_1+\delta'$, where $\delta'<0$ is such that \eqref{eq:DF} still holds when $(s_n)_n$ is replaced by $(\tilde s_n)_n$, the rest of the $a_n$ being kept fixed.   So \eqref{eq:DF} implies that
$$\frac12 e^\delta +\frac12 e^{\delta+\delta'} =1,$$
so $P(\Psi_1)=0$.
Using Taylor series, we have $2\delta+\delta'<0$.
We now replace $\phi,\ \Phi$ and $\Psi_t$ by the adjusted potentials $\tilde\phi$, $\tilde\Phi$, $\tilde\Psi_t$.

\begin{lema}
There exists $t_1>1$ such that $P(\tilde\Psi_t)<0$ for all $t\in (1,t_1)$.
\label{lem:flat}
\end{lema}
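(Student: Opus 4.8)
Let me understand what Lemma~\ref{lem:flat} claims. After the modification, we have a grid function $\tilde\phi$ with $\tilde s_n$ satisfying the normalization $\frac12\sum_{n\ge1}e^{\tilde s_n}=1$, hence $P(\tilde\Psi_1)=0$. We want: there is $t_1>1$ with $P(\tilde\Psi_t)<0$ for all $t\in(1,t_1)$.

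Let me compute $P(\tilde\Psi_t)$. From equation \eqref{eq:ind eg}, since $\tilde\Psi_t$ is locally constant on the full-shift (countable alphabet) induced system,
$$e^{P(\tilde\Psi_t)}=\sum_{n\ge1}2^{n-1}e^{t\tilde s_n - n\,p_\phi(t)}.$$

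Wait — but $p_\phi(t)$ itself appears here, and $p_\phi(t)$ is the *original* pressure we're trying to understand. This is circular if taken naively. Let me reconsider.

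**Rethinking.** The key point: in the DF construction, $\phi\le 0$ forced $p_\phi(t)=\log2$ for $t\ge1$. After the modification, $\tilde\phi$ is no longer $\le0$ (since $\tilde a_0=\delta>0$), so we cannot immediately conclude $p_{\tilde\phi}(t)=\log2$. The strategy must be: *define* a candidate value $\log2$ and show that the induced pressure at that candidate is negative, which by the theory (Lemma~\ref{lem:conf 0 pres} and the relation between $P(\tilde\Psi_t)=0$ and recurrence) characterizes the situation.

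So here's my reading. I suspect $\tilde\Psi_t$ is meant with $p_\phi(t)$ *replaced by* $\log 2$ — i.e., we test the value $\log 2$. Define
$$g(t):=\sum_{n\ge1}2^{n-1}e^{t\tilde s_n - n\log2}=\frac12\sum_{n\ge1}e^{t\tilde s_n-(n-1)\log2}.$$

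**The plan.** I would proceed as follows.

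First, I set $g(t)=\sum_{n\ge1}2^{n-1}e^{t\tilde s_n-n\log2}$, the quantity whose logarithm equals $P(t\tilde\Phi-\tau\log2)$ (this uses that the induced system is a full shift and $\tilde\Psi$ is locally constant, so $e^{P}$ equals the sum over generators, as in \cite{ba} and equation \eqref{eq:t0 is 1}). By the normalization, $g(1)=1$, so $\log g(1)=0$. The goal becomes: show $g(t)<1$ for $t$ slightly bigger than $1$. Hence I want to show $g'(1)<0$ and that $g$ is finite (hence continuous/differentiable) in a neighborhood of $1$.

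Second, I compute the derivative. Formally,
$$g'(t)=\sum_{n\ge1}2^{n-1}\tilde s_n\,e^{t\tilde s_n-n\log2}=\frac12\sum_{n\ge1}\tilde s_n\,e^{t\tilde s_n-(n-1)\log2},$$
so $g'(1)=\frac12\sum_{n\ge1}\tilde s_n\,e^{\tilde s_n-(n-1)\log2}$. I need $g'(1)<0$. This is where the specific choice $2\delta+\delta'<0$ must enter: the first terms of the sum are governed by $\tilde s_1=\delta$ and $\tilde s_2=\delta+(a_1+\delta')$, while for large $n$ the $\tilde s_n$ agree with the original $s_n$ (which were $<0$ and arranged so the tail behaves well). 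The strategy is to split off the first couple of terms, show their contribution together with the tail is negative. Since $\tilde s_n=s_n$ for $n\ge2$ after absorbing the change into $a_0,a_1$... I must track carefully how the modification shifts each $\tilde s_n$ relative to $s_n$: $\tilde s_n=s_n+\delta+\delta'$ for $n\ge2$ and $\tilde s_1=\delta$. The condition $2\delta+\delta'<0$ is exactly what makes the weighted sum $\sum \tilde s_n(\cdots)$ negative, by comparing against the original balanced sum where $s_n\le0$.

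**The main obstacle.** The hard part will be justifying differentiation under the summation sign and controlling the sign of $g'(1)$ rigorously. The sum $\sum_{n}\tilde s_n e^{t\tilde s_n-(n-1)\log2}$ has terms $\tilde s_n$ that can be positive (the $n=1$ term, $\tilde s_1=\delta>0$) and negative (large $n$), so the sign is a genuine competition. I would handle this by writing $g'(1)$ explicitly, isolating the $n=1$ and $n=2$ terms whose combined weight is controlled by $2\delta+\delta'<0$, and bounding the remaining tail $\sum_{n\ge3}$ by its value in the unmodified (DF) case where every $s_n\le0$ guarantees a nonpositive contribution; the strict inequality then comes from the $n=1,2$ block. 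I would also need to verify $g(t)<\infty$ near $t=1$, which follows from $a_n\to0$ (so $\tilde s_n/n\to0$) giving $2^{n-1}e^{t\tilde s_n-n\log2}\sim e^{o(n)}$, hence the radius of convergence in $t$ is positive around $1$; this legitimizes term-by-term differentiation on a neighborhood and yields continuity of $g'$, so $g'(1)<0$ forces $g(t)<1=g(1)$, i.e. $P(\tilde\Psi_t)=\log g(t)<0$, for all $t\in(1,t_1)$ with $t_1$ chosen small enough. This completes the proof.
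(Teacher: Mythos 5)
Your strategy is essentially the paper's: both proofs reduce the problem to showing $\frac12\sum_{n\ge1}e^{t\tilde s_n}<1$ for $t$ slightly above $1$, and both ultimately rest on the normalisation $\frac12\bigl(e^{\delta}+e^{\delta+\delta'}\bigr)=1$ together with a Taylor expansion using $2\delta+\delta'<0$. The one genuine difference is the last step: you differentiate the series at $t=1$ and argue $g'(1)<0$, which forces you to justify term-by-term differentiation and worry about whether $\sum_n|\tilde s_n|e^{\tilde s_n}$ converges (it need not; your appeal to $a_n\to 0$ giving a ``positive radius of convergence around $1$'' is not justified in general, though a right-derivative equal to $-\infty$ would still suffice). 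The paper avoids all of this by exploiting monotonicity termwise: for $n\ge 2$ one has $s_n<0$, so $t>1$ gives $\sum_{n\ge2}e^{ts_n}<\sum_{n\ge2}e^{s_n}=1$ directly from \eqref{eq:DF}, reducing everything to the elementary two-term function $\frac12\bigl(e^{t\delta}+e^{t(\delta+\delta')}\bigr)$. Two further points to repair: first, an algebra slip, since $2^{n-1}e^{t\tilde s_n-n\log 2}=\frac12 e^{t\tilde s_n}$, not $\frac12 e^{t\tilde s_n-(n-1)\log 2}$ (this occurs twice and distorts your weights, although your qualitative splitting into the $n=1,2$ block plus a nonpositive tail survives with the correct weights). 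Second, you correctly flag the circularity in \eqref{eq:ind eg} but then close by writing $P(\tilde\Psi_t)=\log g(t)$; the lemma concerns $\tilde\Psi_t=t\tilde\Phi-\tau p_{\tilde\phi}(t)$ with the true pressure, and the bridge is the inequality $p_{\tilde\phi}(t)\ge\log 2$ (from the measure of maximal entropy on $B$, where $\tilde\phi$ vanishes), which gives $e^{P(\tilde\Psi_t)}\le g(t)$ rather than equality. That is exactly the first inequality in the paper's displayed chain, and with it your argument goes through.
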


Since $p_{\tilde\phi}(t)\ge \log 2$, the lemma implies that $p_{\tilde\phi}(t)=\log 2$ for $t\in [1,t_1]$, so the DF property of the pressure function persists under our perturbation of $\phi$ to $\tilde\phi$.  
Moreover, Lemma~\ref{lem:conf 0 pres} implies that $(\Sigma, \sigma, t\tilde\phi)$ is transient for $t\in [1,t_1]$.

The `up' part of the DFU property, must hold for $p_{\tilde\phi}$ since $\tilde a_0>0$: indeed the graph of $p_{\tilde\phi}$ must be asymptotic to $t\mapsto \tilde a_0t$, and the equilibrium measures for $t\tilde\phi$ denoted by $\mu_t$ must tend to the Dirac measure on the fixed point in $[2]$.

\begin{proof}[Proof of Lemma~\ref{lem:flat}]
As above, since $\tilde \Psi_t$ is locally constant, $e^{P(\tilde\Psi_t)}$ can be computed as
\begin{equation}
e^{P(\tilde\Psi_t)}=\sum_{i \ge 1} e^{t\tilde\Phi_{i}-\tau_i p_{\tilde\phi}(t)}= \sum_{n\ge 1} 2^{n-1} e^{t\tilde s_n-np_{\tilde\phi}(t)}.
\label{eq:trans press}
\end{equation}

Since $p_{\tilde\phi}(t)\ge \log 2$ and $s_n<0$ for $n\ge 2$ for $t>1$ close to 1 we have
\begin{align*}
\sum_{i \ge 1} e^{t\tilde\Phi_{i}-\tau_i p_{\tilde\phi}(t)} &\le \frac12\sum_{n\ge 1} e^{t\tilde s_n}=\frac12\left(e^{t\delta} +e^{t(\delta+\delta')}\sum_{n\ge 2} e^{ts_n}\right)<\frac12\left(e^{t\delta}+e^{t(\delta+\delta')}\right)<1,
\end{align*}
where the final inequality follows from a Taylor series expansion and the fact that $2\delta+\delta'<0$.  This implies that
$P(\tilde\Psi_t)<0$ for $t>1$ close to 1.  We let $t_1>t''>1$ be minimal such that $t>t_1$ implies $p_{\tilde\phi}(t)>\log 2$.
\end{proof}

For brevity, from here on we will drop the tildes from our notation when discussing the potentials above.

\subsection{Tails and smoothness}

So far we have not made any assumptions on the precise form of $a_n$ for large $n$.  In this section we will make our assumptions precise in order to distinguish cases (1) from case (3) in Theorem~\ref{thm:main grid}, as well as case (2) from case (4).  That is to say, we will address the question of the smoothness of $p_\phi$ at 1 and $t_1$ by defining different forms that $a_n$, and hence $s_n$, can take as $n\to \infty$.   In fact, it is only the form of $a_n$ for large $n$ which separates the cases we consider. As in \cite[Section 4]{Hnonuni}, see also \cite[Section 6]{BTeqgen}, let us assume that for all large $n$, for some $\gamma>1$ we have
$$a_n=\gamma\log\left(\frac{n}{n+1}\right).$$

We will see that we have a first order phase transition  in the pressure function $p_\phi$ whenever $\gamma>2$, but not when $\gamma\in (1,2]$.

Clearly, there is some $\kappa\in \R$ so that
$s_n\sim \kappa-\gamma\log n.$
So applying the computation in \eqref{eq:t0 is 1},
$$\sum_ie^{\Psi_{1,i}}=\frac12\sum_ne^{s_n}=(1+O(1))\sum_n\frac1{n^\gamma}.$$
Since we assumed that $\gamma>1$, we can ensure that this is finite, and indeed we can choose $(a_n)_n$ in such a way that $\sum_i e^{\Psi_{1,i}}=1$ as in \eqref{eq:t0 is 1}.

We now show that the graph of the pressure in the case that the pressure is DFU is either $C^1$ everywhere or only non-$C^1$ at both $t=1$ and $t=t_1$.   The issue of smoothness of $p_\phi$ in the $DF$ case follows as in the DFU case, so Theorem~\ref{thm:main grid} then follows from Lemma~\ref{lem:flat} and the following proposition.

\begin{prop}
For potential $\phi$ chosen as above, there exists $t_1>1$ such that $p_\phi(t)=\log 2$ for all $t\in [1,t_1]$.  Moreover, if $\gamma\in (1,2]$ then $p_\phi$ is everywhere $C^1$, while if $\gamma>2$ then $p_\phi$ fails to be differentiable at both $t=1$ and $t=t_1$.
\end{prop}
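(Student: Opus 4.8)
The plan is to analyze the pressure $P(\Psi_t)$ of the induced (locally constant) potential as a function of $t$, and to use the relation $e^{P(\Psi_t)}=\sum_{n\ge 1}2^{n-1}e^{ts_n-np_\phi(t)}$ from \eqref{eq:ind eg}. The key principle, inherited from the renewal-shift philosophy of Section~\ref{ssec:nonc} and the Hofbauer analysis of Section~\ref{ssec:ho-ke}, is that the induced system is positive recurrent precisely when the first-return series converges with its weighted ($\tau$-multiplied) counterpart also converging, and that smoothness of $p_\phi$ at a transition point is governed by the finiteness of $\sum_n \tau_n \, e^{\Psi_{t,n}}$, i.e.\ $\sum_n n\, 2^{n-1} e^{ts_n - n p_\phi(t)}$. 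The flat part $p_\phi(t)=\log 2$ on $[1,t_1]$ is already supplied by Lemma~\ref{lem:flat} together with the lower bound $p_\phi(t)\ge\log 2$, so what remains is purely the differentiability statement at the two endpoints $t=1$ and $t=t_1$.

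First I would fix the asymptotics $s_n\sim\kappa-\gamma\log n$, so that on the flat part, where $p_\phi(t)=\log 2$, the summand in \eqref{eq:ind eg} behaves like $2^{n-1}e^{ts_n-n\log 2}=\tfrac12 e^{ts_n}$, which is comparable to $n^{-\gamma t}$. Then the return-time–weighted series is comparable to $\sum_n n\cdot n^{-\gamma t}=\sum_n n^{1-\gamma t}$. At the left endpoint $t=1$ this is $\sum_n n^{1-\gamma}$, which converges iff $\gamma>2$ and diverges iff $\gamma\in(1,2]$. This dichotomy is exactly the mechanism for smoothness: a standard thermodynamic-formalism fact (Abramov/Sarig, as used in Remark~\ref{rmk:Abram} and Lemma~\ref{lem:s=P}) says that the one-sided derivative of $p_\phi$ at the transition is controlled by $\int\tau\,d\mu_t$, the expected return time of the induced equilibrium state; this is finite (giving positive recurrence and a genuine kink, since the affine/flat piece has a different slope) exactly when $\sum_n \tau_n e^{\Psi_{t,n}}<\infty$, i.e.\ when $\gamma>2$, and infinite (forcing the derivative from the convex side to match the flat slope, producing a $C^1$ junction) when $\gamma\in(1,2]$.

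The key steps, in order, are: (i) record the flat part $p_\phi\equiv\log 2$ on $[1,t_1]$ from Lemma~\ref{lem:flat}; (ii) for $t$ slightly below $1$, use the implicit relation $P(\Psi_t)=0$ defining $p_\phi(t)$ on the strictly-convex side, and differentiate implicitly to get $D^-p_\phi(1)=\dfrac{\int\Phi\,d\mu_1}{\int\tau\,d\mu_1}$ via Abramov, where $\mu_1$ is the induced equilibrium state at the transition; (iii) compute $\int\tau\,d\mu_1=\sum_n n\,e^{s_n-n\log2}\big/\!\sum_n e^{s_n-n\log2}$ and observe this is finite iff $\gamma>2$; (iv) when $\int\tau\,d\mu_1=\infty$, conclude $D^-p_\phi(1)=0$, matching the flat slope $0$, hence $C^1$; when $\int\tau\,d\mu_1<\infty$, conclude $D^-p_\phi(1)<0\neq 0$, hence a first-order transition; (v) repeat the identical analysis at the right endpoint $t_1$, where on the `up' side the pressure is asymptotically $t\mapsto a_0 t$ with $a_0=\delta>0$, so the relevant one-sided derivative from the right is again governed by finiteness of the same $\sum_n n^{1-\gamma t_1}$-type series, giving the matching $\gamma>2$ versus $\gamma\in(1,2]$ dichotomy and hence simultaneous failure/success of differentiability at both endpoints.

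The main obstacle I expect is step (ii)/(v): justifying the implicit differentiation of the pressure at the transition point and identifying the one-sided derivative with $\int\Phi\,d\mu_t\big/\!\int\tau\,d\mu_t$. This requires that as $t\uparrow 1$ the induced equilibrium states $\mu_{\Psi_t}$ converge (weak$^*$, and with control of $\int\tau\,d\mu_{\Psi_t}$) to the induced equilibrium state at $t=1$, together with real analyticity of $t\mapsto P(\Psi_t)$ on the strictly-convex side (from quasicompactness of the transfer operator for the locally constant induced potential, as invoked around \eqref{eq:Hof graph}). The delicate point is the \emph{monotone convergence / uniform integrability} of $\int\tau\,d\mu_{\Psi_t}$ as $t\to 1^-$: precisely when $\gamma\in(1,2]$ this integral blows up, and one must show it blows up in a way that forces $D^-p_\phi(1)\to 0$ rather than to a negative limit. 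I would handle this by a direct estimate on the derivative of the defining series $\sum_n 2^{n-1}e^{ts_n-np_\phi(t)}=1$, differentiating in $t$ and isolating $Dp_\phi$, where the coefficient of $Dp_\phi$ is exactly $\sum_n n\,2^{n-1}e^{ts_n-np_\phi(t)}$; letting this coefficient tend to $+\infty$ (the $\gamma\in(1,2]$ case) drives $Dp_\phi\to 0$, giving $C^1$, while its finiteness (the $\gamma>2$ case) leaves $Dp_\phi$ bounded away from $0$, giving the kink.
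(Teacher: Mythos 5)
Your treatment of the flat part and of the left endpoint $t=1$ is essentially the paper's own argument: the flat piece is Lemma~\ref{lem:flat}, and your steps (ii)--(iv) reproduce Lemma~\ref{lem:Dp 0}, namely the Abramov quotient \eqref{eq:Abra flat} together with the observation that $|s_n|=O(\log n)=o(n)$, so that divergence of $\sum_n n e^{s_n}$ (the case $\gamma\in(1,2]$) forces $D^-p_\phi(1)=0$, while convergence (the case $\gamma>2$) leaves a genuine kink at $t=1$.

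The gap is in step (v), at the right endpoint. At $t=t_1$ the induced Gibbs state has $\mu_{\Psi_{t_1}}(X_n)=\tfrac12 e^{t_1 s_n}\asymp n^{-\gamma t_1}$, so --- exactly as your own formula records --- the return-time series there is $\sum_n n^{1-\gamma t_1}$, whose convergence threshold is $\gamma t_1=2$, \emph{not} $\gamma=2$. The claim that this yields ``the matching $\gamma>2$ versus $\gamma\in(1,2]$ dichotomy'' is therefore a non sequitur. Since $t_1>1$, the implication $\gamma>2\Rightarrow\gamma t_1>2$ does hold, so the $\gamma>2$ case (kinks at both endpoints) is fine; but for $\gamma\in(1,2]$ your computation gives $\int\tau\,d\mu_{\Psi_{t_1}}<\infty$ whenever $t_1>2/\gamma$ (e.g.\ always when $\gamma=2$), and then the same reasoning you use at $t=1$ produces $D^+p_\phi(t_1)>0=D^-p_\phi(t_1)$, i.e.\ a kink at $t_1$ --- contradicting the conclusion you want. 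So ``repeat the identical analysis'' does not close the $\gamma\in(1,2]$ case at $t_1$; you would need to show in addition that the construction forces $\gamma t_1\le 2$, or argue differently. For comparison, the paper avoids recomputing asymptotics at $t_1$ and instead uses a comparison lemma (Lemma~\ref{lem:c1 or not}): since $s_n<0$ for $n\ge2$ and $t_1>1$, the series defining $\int\tau\,d\mu_{\Psi_{t_1}}$ is dominated (apart from the first term) by that defining $\int\tau\,d\mu_{\Psi_1}$, which transfers finiteness from $1$ to $t_1$ and settles the $\gamma>2$ direction. Note that this comparison likewise gives only one implication, and the converse --- divergence at $1$ implies divergence at $t_1$, which is precisely what the $\gamma\in(1,2]$ case requires --- is exactly the point your asymptotics show to be delicate; any complete write-up must address it explicitly.
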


The first part of the proposition follows from Lemma~\ref{lem:flat}, while the second follows directly from the following two lemmas.  We will use the fact that if $p_\phi$ is $C^1$ at $t$ then $Dp_\phi(t)=\int\phi~d\mu_t$ (see \cite[Chapter 4]{PU}).

\begin{lema}
If $\gamma\in (1, 2]$ then $Dp_\phi(1)=0$.
\label{lem:Dp 0}
\end{lema}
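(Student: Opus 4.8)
The plan is to compute the two one-sided derivatives of $p_\phi$ at $t=1$ separately. Since $p_\phi(t)=\log 2$ on $[1,t_1]$ (Lemma~\ref{lem:flat}), the right derivative already vanishes, $D^+p_\phi(1)=0$, so it suffices to show $D^-p_\phi(1)=0$. As $p_\phi$ is convex, $D^-p_\phi(1)=\lim_{t\to 1^-}Dp_\phi(t)$, and then $Dp_\phi(1)=0$ follows.

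For $t<1$ the system is positive recurrent, so $P(\Psi_t)=0$, the potential $t\phi$ has a unique equilibrium state $\mu_t$ which is the projection of the induced Gibbs measure $\nu_t$ for $\Psi_t$, and $p_\phi$ is real analytic, hence $C^1$, at $t$; thus $Dp_\phi(t)=\int\phi\,d\mu_t$. Because $\Psi_t$ is locally constant with $\nu_t(X_n)=2^{n-1}e^{ts_n-np_\phi(t)}$ (Remark~\ref{rmk:Phi loc cons}), Abramov's formula (Remark~\ref{rmk:Abram}) gives
\[
Dp_\phi(t)=\int\phi\,d\mu_t=\frac{\int\Phi\,d\nu_t}{\int\tau\,d\nu_t}=\frac{\sum_{n\ge1}s_n\,2^{n-1}e^{ts_n-np_\phi(t)}}{\sum_{n\ge1}n\,2^{n-1}e^{ts_n-np_\phi(t)}}.
\]

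I would then let $t\to 1^-$ in this ratio. At $t=1$ the summands reduce to $\tfrac12 s_ne^{s_n}$ and $\tfrac12 n\, e^{s_n}$ respectively, since $p_\phi(1)=\log 2$. Using the hypothesis $a_n=\gamma\log\frac{n}{n+1}$ for large $n$, which yields $s_n\sim\kappa-\gamma\log n$ and hence $e^{s_n}\sim e^{\kappa}n^{-\gamma}$, the denominator at $t=1$ is comparable to $\tfrac{e^\kappa}2\sum_n n^{1-\gamma}$, which diverges precisely because $\gamma\le 2$; by Fatou's lemma the denominator therefore tends to $+\infty$ as $t\to1^-$. The numerator at $t=1$ is comparable to $\tfrac12\sum_n(\kappa-\gamma\log n)e^{\kappa}n^{-\gamma}$, which converges absolutely since $\gamma>1$. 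To pass to the limit in the numerator I would fix a left-neighbourhood $[t_*,1]$ with $t_*\gamma>1$ and note that, because $s_n<0$ for large $n$, the map $t\mapsto e^{ts_n}$ is monotone and, combined with $p_\phi(t)\ge\log 2$, gives the summable $t$-independent dominating function $\tfrac12|s_n|e^{t_*s_n}$; dominated convergence then shows the numerator tends to the finite value $\tfrac12\sum_n s_ne^{s_n}$. Consequently the ratio tends to $0$, so $D^-p_\phi(1)=0$ and $Dp_\phi(1)=0$.

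The main obstacle is the rigorous interchange of limit and summation as $t\to1^-$: the denominator must be shown to blow up (handled by Fatou, and this is exactly where $\gamma\le 2$ is used) while the numerator stays bounded (handled by dominated convergence, where $\gamma>1$ is used). Making the dominating bound uniform in $t$ rests on the elementary observations that $p_\phi(t)\ge\log 2$ and that the tail terms $e^{ts_n}$ are monotone in $t$; these, together with the precise asymptotics $s_n\sim\kappa-\gamma\log n$, are what pin down the threshold $\gamma=2$ separating divergence from convergence of $\sum_n n^{1-\gamma}$.
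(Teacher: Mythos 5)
Your proof is correct and follows essentially the same route as the paper's: the right derivative vanishes by the flatness of $p_\phi$ on $[1,t_1]$, and the left derivative is expressed via Abramov's formula as the ratio $\sum_n s_n e^{ts_n-n(p_\phi(t)-\log 2)}\big/\bigl(2\sum_n n\,e^{ts_n-n(p_\phi(t)-\log 2)}\bigr)$, whose denominator diverges as $t\to 1^-$ precisely because $\gamma\le 2$ while the numerator remains negligible. You merely supply more detail on the limit interchange (Fatou for the denominator, dominated convergence for the numerator) than the paper, which just observes that $|s_n|\sim\gamma\log n$ is eventually much smaller than $n$; the underlying argument is the same.
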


\begin{proof}
Since by Lemma~\ref{lem:flat}, for $t\in [1,t_1]$, $p_\phi$ is constant $\log 2$, we have $Dp_\phi^+(1)=0$, so to prove $Dp_\phi(1)=0$ we must show $Dp_\phi^-(1)=0$.

Suppose that $t<1$.  Then by the Abramov formula (see Remark~\ref{rmk:Abram}),
\begin{align}\int\phi~d\mu_t&=\frac{\int\Phi~d\mu_{\Psi_t}}{\int\tau~d\mu_{\Psi_t}}=
\frac{\sum_ns_n e^{ts_n-n(p_\phi(t)-\log2)}}{2\sum_nn e^{ts_n-n(p_\phi(t)-\log2)}}.
\label{eq:Abra flat}
\end{align}

 As above, for large $n$, $s_n\sim \kappa-\gamma\log n$, which is eventually much smaller, in absolute value, than $n$.  Since also, $\sum_nn e^{s_n-n(p_\phi(t)-\log2)}\to \infty$ as $t\to 1$, we can make $\int\phi~d\mu_t$ arbitrarily small by taking $t<1$ close enough to 1.  Since when $p_\phi$ is $C^1$ at $t$ then $Dp_\phi(t)=\int\phi~d\mu_t$, this completes the proof.
\end{proof}

\begin{lema}
Suppose that $\phi$ is a grid function as above and the pressure $p_\phi$ is DFU.  Then $p_\phi$ is $C^1$ at $t=1$ if and only if $p_\phi$ is $C^1$ at $t=t_1$.
\label{lem:c1 or not}
\end{lema}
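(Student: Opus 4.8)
\emph{The plan} is to show that, at each end of the flat interval $[1,t_1]$, the $C^1$ property of $p_\phi$ is equivalent to null recurrence there, and then to see that $t=1$ and $t=t_1$ are forced into the same recurrence mode. Since $p_\phi\equiv\log 2$ on $[1,t_1]$, the derivatives taken from inside this interval vanish, $D^+p_\phi(1)=0=D^-p_\phi(t_1)$, so $p_\phi$ is $C^1$ at $1$ (resp. at $t_1$) if and only if the one-sided derivative coming from the recurrent side, $D^-p_\phi(1)$ (resp. $D^+p_\phi(t_1)$), is also $0$. Exactly as in the proof of Lemma~\ref{lem:Dp 0}, I would evaluate these through the Abramov formula of Remark~\ref{rmk:Abram}: writing $g(t):=\tfrac12\sum_n e^{ts_n}$ (so that $g=e^{P(\Psi_t)}$ while $p_\phi=\log 2$), for $t^\ast\in\{1,t_1\}$ one has
\[
\lim_{t\to t^\ast}\int\phi~d\mu_t=\frac{g'(t^\ast)}{\int\tau~d\mu_{\Psi_{t^\ast}}},\qquad \int\tau~d\mu_{\Psi_{t^\ast}}=\tfrac12\sum_n n\,e^{t^\ast s_n},
\]
the limit being taken from the recurrent side and using $p_\phi(t)\to\log 2$.

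The first step is to check the numerator never vanishes. Since $g$ is strictly convex with $g(1)=g(t_1)=1$ and $g<1$ on $(1,t_1)$ (Lemma~\ref{lem:flat}), we have $g'(1)<0<g'(t_1)$, and both are finite because $\sum_n(\log n)\,n^{-\gamma t^\ast}<\infty$ for $\gamma t^\ast>1$. Hence the recurrent-side derivative vanishes precisely when the expected return time diverges, i.e.
\[
p_\phi \text{ is } C^1 \text{ at } t^\ast \iff \int\tau~d\mu_{\Psi_{t^\ast}}=\infty,
\]
which is exactly the statement that the induced Gibbs measure $\mu_{\Psi_{t^\ast}}$ projects to no finite measure, that is, that $(\Sigma_3,\sigma,t^\ast\phi)$ is null recurrent. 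This reduces the lemma to the single assertion
\[
\sum_n n\,e^{s_n}=\infty \iff \sum_n n\,e^{t_1 s_n}=\infty.
\]

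To attack this I would use the tail behaviour $s_n\sim\kappa-\gamma\log n$, under which $\sum_n n\,e^{ts_n}$ behaves like $\sum_n n^{1-\gamma t}$ and therefore diverges exactly for $t\le 2/\gamma$ and converges for $t>2/\gamma$. Thus $C^1$-ness at $t^\ast$ is equivalent to $t^\ast\le 2/\gamma$, and the lemma becomes the statement that $1$ and $t_1$ lie on the same side of the threshold $2/\gamma$. When $\gamma>2$ this is immediate, since $2/\gamma<1<t_1$ forces both first-moment sums to converge, so both ends are positive recurrent (first-order transitions), matching the claim.

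\emph{The main obstacle} is the case $\gamma\in(1,2]$, where $2/\gamma\ge 1$ and one must exclude $1\le 2/\gamma<t_1$, which would produce a $C^1$ point at $1$ but a first-order transition at $t_1$. Here I would have to invoke the relationship between $t_1$ and $\gamma$ built into the construction, namely the two normalisations $g(1)=g(t_1)=1$ together with the perturbation $a_0=\delta>0$ fixed in Lemma~\ref{lem:flat}, in order to locate $t_1$ relative to $2/\gamma$ directly from the defining equation $g(t_1)=1$ and so conclude $t_1\le 2/\gamma$. I expect this comparison to be the delicate and decisive step; the reduction of the $C^1$ question to it is routine.
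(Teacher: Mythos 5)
Your reduction is sound and is essentially the paper's own: arguing as in Lemma~\ref{lem:Dp 0} via the Abramov formula \eqref{eq:Abra flat}, the one-sided derivative of $p_\phi$ at $t^*\in\{1,t_1\}$ taken from the recurrent side is the ratio of $\tfrac12\sum_n s_ne^{t^*s_n}$ to $\int\tau~d\mu_{\Psi_{t^*}}=\tfrac12\sum_n ne^{t^*s_n}$; your convexity argument that the numerator is finite and non-zero (indeed $g'(1)<0<g'(t_1)$) is correct, so $C^1$-ness at $t^*$ is equivalent to $\int\tau~d\mu_{\Psi_{t^*}}=\infty$, i.e.\ to null recurrence at $t^*$. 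Up to this point you coincide with the paper, and your treatment of $\gamma>2$ (both first-moment series converge, both transitions are first order) is complete.

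The gap is exactly where you say you expect it, and it is genuine. Having reduced the lemma to ``$1$ and $t_1$ lie on the same side of $2/\gamma$,'' you must show $t_1\le 2/\gamma$ whenever $\gamma\in(1,2]$, and nothing in the construction hands you this: $t_1$ is pinned down by the normalisation $\tfrac12\sum_ne^{t_1s_n}=1$, and shrinking the perturbation parameter $\delta$ of Lemma~\ref{lem:flat} pushes $t_1$ to infinity (roughly $t_1\sim\log 2/\delta$) while leaving the tail exponent $\gamma$ untouched, so the comparison you need is a real constraint on the data, not a routine verification. The paper's proof takes a different final step: it avoids the threshold $2/\gamma$ entirely and instead uses that $s_n<0$ for $n\ge 2$ together with $t_1>1$ and $p_\phi(1)=p_\phi(t_1)$ to get the monotonicity $\sum_n ne^{t_1s_n}\le\sum_n ne^{s_n}$, hence ``finite at $1$ implies finite at $t_1$.'' You should note, however, that this yields only the implication you already have (and its contrapositive), which settles $\gamma>2$; the direction you are stuck on --- $\sum_n ne^{s_n}=\infty$ while $\sum_n ne^{t_1s_n}<\infty$, i.e.\ $1\le 2/\gamma<t_1$ --- is precisely the one that a monotone comparison cannot exclude. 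So your identification of the delicate step is correct, but as written your argument (like the one-directional comparison) proves the lemma only for $\gamma>2$; to finish you must either impose a condition tying $\delta$, and hence $t_1$, to $\gamma$, or supply a genuinely new argument for the case $\gamma\in(1,2]$.
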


\begin{proof}
Using the argument in the proof of Lemma~\ref{lem:Dp 0}, in particular \eqref{eq:Abra flat}, if $\int\tau~d\mu_{\Psi_{t}}=\infty$,
we can make $Dp_\phi(t')$ arbitrarily close to 0 by taking $t'$ close enough to $t$.  Similarly if this integral is finite at $t$ then the derivative $Dp_\phi(t)$ is non-zero.  So to prove the lemma, we need to show that the finiteness or otherwise of $\int\tau~d\mu_{\Psi_{t}}$ is the same at both $t= 1$ and $t=t_1$.

As in the proof of Lemma~\ref{lem:flat}, $s_n<0$ for $n\ge 2$. So
since $p_\phi(t)=p_\phi(t_1)$ and $t_1>1$,
$$\int\tau~d\mu_{\Psi_1}>\sum_{i\ge 2}\tau_ie^{\Phi_i-\tau_i p_\phi(1)} >
\sum_{i\ge 2}\tau_ie^{t_1\Phi_i-\tau_i p_\phi(t_1)}.$$
Therefore if $\int\tau~d\mu_{\Psi_1}<\infty$ then $\int\tau~d\mu_{\Psi_{t_1}}<\infty$.  Similarly, if $\int\tau~d\mu_{\Psi_{t_1}}=\infty$ then $\int\tau~d\mu_{\Psi_{1}}=\infty$.
Hence either $Dp_\phi(1)$ and $Dp_\phi(t_1)$ are both 0 or are both non-zero.
\end{proof}

\begin{rem}
In the case $\gamma\in (1,2]$, the measure $\mu_{\Psi_1}$
is not regarded as an equilibrium state for the system $(M_0, F, \Psi_1)$ since $$\int\Psi_1~d\mu_{\Psi_1}=-\infty.$$

This follows since
$$\int\Psi_1~d\mu_{\Psi_1}=\sum_n(s_n-np_\phi(t))e^{s_n} \asymp \sum_n\frac{a-\gamma\log n-np_\phi(t)}{n^\gamma},$$ so for all large $n$ the summands are dominated by the terms $-p_\phi(t)n^{1-\gamma}$ which are not summable.
\end{rem}

\begin{rem}
If we wanted the limit of $\mu_t$ as $t\to \infty$ to be a measure with positive entropy, then one way would be to choose our dynamics to be $x\mapsto 5x \mod 1$ and the set $M_0$ to correspond to the interval $[0, 2/5]$ for example.
\end{rem}

Note that for our examples, we can not produce more than two equilibrium states simultaneously.   One can see this as following since we are essentially working with two intermingled systems.

\section*{Acknowledgements}
We would like to thank H.\ Bruin and O.\ Sarig for their useful comments and remarks.  This work was begun while MT was at the
Centro de Matem\'atica da Universidade do Porto, which he would like to thank for their support.


\begin{thebibliography}{99}

\bibitem[Aa]{Aaro_book} J.\ Aaronson,  \emph{An introduction to infinite ergodic theory,} Mathematical Surveys and Monographs, 50. American Mathematical Society, Providence, RI, 1997.

\bibitem[A]{Abra}   L.M.\ Abramov, \emph{On the entropy of a flow,} Dokl. Akad. Nauk SSSR \textbf{128} (1959) 873--875.

\bibitem[Ba]{ba} L.\ Barreira,  \emph{Dimension and recurrence in hyperbolic dynamics.} Progress in Mathematics, 272. Birkh\"auser Verlag, Basel, 2008.

\bibitem[Bo1]{bo1}
R.\ Bowen, \emph{Symbolic dynamics for hyperbolic flows}, Amer. J.
Math. \textbf{95} (1973) 429--460.

\bibitem[Bo2]{bo2} R.\ Bowen, \emph{Some systems with unique equilibrium states,}
 Math. Systems Theory \textbf{8} (1974) 193--202.

\bibitem[Bo3]{bo3}  R.\ Bowen, \emph{Equilibrium states and the ergodic theory of Anosov diffeomorphisms.} Second revised edition. With a preface by David Ruelle. Edited by Jean-Ren\'e Chazottes. Lecture Notes in Mathematics, 470. Springer-Verlag, Berlin, 2008.

\bibitem[Br]{Brminim} H.\ Bruin,
{\em Minimal Cantor systems and unimodal maps,}
J. Difference Equ. Appl. {\bf 9} (2003) 305--318.

\bibitem[BT1]{BTeqgen} H.\ Bruin, M.\ Todd, {\em Equilibrium states
    for potentials with $\sup \phi - \inf \phi < h_{top}(f)$,} Comm.
    Math. Phys. {\bf 283} (2008) 579--611.

\bibitem[BT2]{BTeqnat} H.\ Bruin, M.\ Todd,
\emph{Equilibrium states for interval maps: the potential $ -t \log|Df|$,}
 Ann. Sci. \'Ec. Norm. Sup\'er. {\bf 42} (2009) 559--600.


\bibitem[ChH]{chh} J.R.\ Chazottes, M.\ Hochman, 
 \emph{On the zero-temperature limit of Gibbs states,}
 Comm. Math. Phys. \textbf{297} (2010) 265--281.

\bibitem[CRL]{CJ} M.I.\ Cortez, J.\ Rivera-Letelier,
\emph{Invariant measures of minimal post-critical sets of logistic maps,}
Israel J. Math. 176 (2010) 157--193.

\bibitem[C1]{Cyr_CMS} V.\ Cyr,  \emph{Countable Markov shifts with Transient Potentials,} `Advance Access' Proc LMS, 2011.

\bibitem[C2]{Cyr_thes} V.\ Cyr, \emph{Transient Markov Shifts}
 Ph.D. thesis, The Pennsylvania State University, 2010.

\bibitem[CS]{CyrSar} V.\ Cyr, O.\ Sarig, \emph{Spectral Gap and Transience for Ruelle Operators on Countable Markov Shifts,}
    Comm. Math. Phys. \textbf{292} (2009) 637--666.

\bibitem[D]{Dobphase} N.\ Dobbs,
{\em Renormalisation induced phase transitions for unimodal maps,}
Comm. Math. Phys. {\bf 286} (2009) 377--387.


\bibitem[Fe]{fe}  W. \ Feller, \emph{An introduction to probability theory and its applications.} Vol. I. Third edition John Wiley and Sons, Inc., New York-London-Sydney 1968.

\bibitem[Gu1]{Gutopent} B.M.\ Gurevi\v c,
\emph{Topological entropy for denumerable Markov chains,}
Dokl. Akad. Nauk SSSR {\bf 10} (1969) 911--915.

\bibitem[Gu2]{Gushiftent} B.M.\ Gurevi\v c,
\emph{Shift entropy and Markov measures in the path space of a denumerable graph,}
Dokl. Akad. Nauk SSSR {\bf 11} (1970) 744--747.


\bibitem[H]{Hnonuni} F.\ Hofbauer,
{\em Examples for the nonuniqueness of the equilibrium state,}
Trans. Amer. Math. Soc.  {\bf 228}  (1977) 223--241.

\bibitem[HK]{HofKel_pw} F.\ Hofbauer, G.\ Keller, \emph{Equilibrium states for piecewise monotonic transformations,} Ergodic Theory Dynam. Systems \textbf{2} (1982) 23--43.

\bibitem[IT1]{ITeq} G.\ Iommi, M.\ Todd, \emph{Natural equilibrium states  for multimodal maps,} Comm. Math. Phys. \textbf{300} (2010) 65--94.



\bibitem[LSV]{LivSaVai} C.\ Liverani, B.\ Saussol, S.\ Vaienti,
 \emph{Probabilistic approach to intermittency,}
  Ergodic Theory Dynam. Systems \textbf{19} (1999) 671--685.


\bibitem[MP]{ManPom}   P.\ Manneville, Y.\ Pomeau,
\emph{Intermittent transition to turbulence in dissipative dynamical systems,}
Comm. Math. Phys. {\bf 74} (1980) 189--197.

\bibitem[MP]{MarkPaul} N.G.\ Markley, M.E.\ Paul,  \emph{Equilibrium states of grid functions,} Trans.  Amer. Math. Soc. \textbf{274} (1982) 169--191.

\bibitem[MU1]{MUifs} R.\ Mauldin, M.\ Urba\'nski,
\emph{Dimensions and measures in infinite iterated function systems,} Proc. London Math. Soc. (3) \textbf{73} (1996) 105--154.


\bibitem[O]{Olivier} E.\ Olivier, \emph{Multifractal analysis in symbolic dynamics and distribution of pointwise dimension for $g$-measures,}
     Nonlinearity \textbf{12} (1999) 1571--1585.

\bibitem[PP]{ParPol}
W.\ Parry, M.\ Pollicott, \emph{Zeta Functions and the Periodic
Orbit Structure of Hyperbolic Dynamics}, Ast\'erisque 187--188,
1990.

\bibitem[PS]{PeSe} Y.\ Pesin, S.\ Senti,
\emph{Equilibrium measures for maps with inducing schemes,}
J. Mod. Dyn. {\bf 2} (2008) 1--31.


\bibitem[Pi]{Pinheiro} V.\ Pinheiro,
\emph{Expanding Measures,}
Preprint (arXiv:0811.2545).



\bibitem[PU]{PU}  F.\ Przytycki, M.\ Urba\'nski,
 \emph{Conformal Fractals: Ergodic Theory Methods,} Cambridge University Press 2010.

\bibitem[Ra]{Ratner} M.\ Ratner, \emph{Markov partitions for Anosov flows on
$n$-dimensional manifolds}, Israel J. Math. \textbf{15} (1973),
92--114.

\bibitem[Ru1]{ru1} D.\ Ruelle, \emph{Statistical mechanics on a compact set with $Z^{v}$ action satisfying expansiveness and specification,} Trans. Amer. Math. Soc. 187 (1973) 237--251.


\bibitem[S1]{Sartherm} O.\ Sarig,
{\em Thermodynamic formalism for countable Markov shifts,}
Ergodic Theory Dynam. Systems {\bf 19} (1999) 1565--1593.

\bibitem[S2]{Sarnull} O.\ Sarig,
 \emph{Thermodynamic formalism for null recurrent potentials,} Israel J. Math. \textbf{121} (2001) 285--311.
 

\bibitem[S3]{Sarphase} O.\ Sarig,
\emph{Phase transitions for countable Markov shifts,}
Comm. Math. Phys. {\bf 217} (2001) 555--577.



\bibitem[S4]{sanotes}  O.\ Sarig, \emph{Lectures Notes on Thermodynamic Formalism for Topological Markov Shifts} (2009).

\bibitem[S5]{snunif}   O.\ Sarig, \emph{Symbolic dynamics for surface diffeomorphisms with positive topological entropy} Preprint  (arXiv:1105.1650).

\bibitem[T]{T} M.\ Todd,  \emph{Multifractal analysis for multimodal maps,} Preprint.

\bibitem[V1]{v1} D.\ Vere-Jones, \emph{Geometric ergodicity in denumerable Markov chains,} Quart. J. Math. Oxford Ser. (2) \textbf{13} (1962) 7--28.

\bibitem[V2]{v2}   D.\ Vere-Jones, \emph{Ergodic properties of nonnegative matrices I,} Pacific J. Math. \textbf{22} (1967) 361--386. 

\bibitem[W1]{waTA}  P.\ Walters, \emph{Ruelle's operator theorem and $g$-measures,} Trans. Amer. Math. Soc. 214 (1975) 375--387.

\bibitem[W2]{wbeta}  P.\ Walters, \emph{Equilibrium states for $\beta$-transformations and related transformations,} Math. Z. \textbf{159} (1978) 65--88.

\bibitem[W3]{wa1} P.\ Walters, \emph{Invariant measures and equilibrium states for some   mappings which expand distances, }
Trans. Amer. Math. Soc. \textbf{236} (1978) 121--153.


\bibitem[W4]{wa2} P.\ Walters, \emph{Convergence of the Ruelle operator for a function satisfying Bowen's condition,} Trans. Amer. Math. Soc. \textbf{353} (2001) 327--347.

\bibitem[W5]{wa3} P.\ Walters, \emph{A natural space of functions for the Ruelle operator theorem,} Ergodic Theory Dynam. Systems \textbf{27} (2007) 1323--1348.

\bibitem[Z]{zw} R.\ Zweimuller, \emph{Invariant measures for general(ized) induced transformations,} Proc. Amer. Math. Soc. \textbf{133} (2005) 2283--2295.

\end{thebibliography}
\end{document}